\documentclass{article}

\usepackage{arxiv}

\usepackage[utf8]{inputenc} % allow utf-8 input
\usepackage[T1]{fontenc}    % use 8-bit T1 fonts
\usepackage{hyperref}       % hyperlinks
\usepackage{url}            % simple URL typesetting
\usepackage{booktabs}       % professional-quality tables
\usepackage{amsfonts}       % blackboard math symbols
\usepackage{nicefrac}       % compact symbols for 1/2, etc.
\usepackage{microtype}      % microtypography
\usepackage{lipsum}
\usepackage{graphicx}
\usepackage{amsmath,amssymb,amsthm,mathtools}
\usepackage{enumitem}
\usepackage{subcaption}
\usepackage{xcolor}
\newtheorem{theorem}{Theorem}[section]
\newtheorem{proposition}[theorem]{Proposition}
\newtheorem{lemma}[theorem]{Lemma}
\newtheorem{corollary}[theorem]{Corollary}
\theoremstyle{definition}
\newtheorem{definition}[theorem]{Definition}
\newtheorem{example}[theorem]{Example}
\theoremstyle{remark}
\newtheorem{remark}[theorem]{Remark}

\newcommand{\Hcal}{\mathcal{H}}
\newcommand{\Ecal}{\mathcal{E}}
\newcommand{\Vcal}{\mathcal{V}}
\newcommand{\Fcal}{\mathcal{F}}

\newcommand{\orb}{\operatorname{Orb}}
\newcommand{\val}{\operatorname{val}}
\graphicspath{ {./figures/} }

\title{Eulerian and Bipartite Partial Duals of Hypermaps}

\author{
 Yufan Han \\
  College of Mathematical Sciences\\
  Xinjiang Normal University\\
  Urumqi 830017, China \\
  \texttt{} \\
   \And
 Metrose Metsidik \\
  College of Mathematical Sciences\\
  Xinjiang Normal University\\
  Urumqi 830017, China \\
  \texttt{metrose@xjnu.edu.cn} \\
}

\begin{document}
\maketitle
\begin{abstract}
We study hyperedge partial duals of finite hypermaps in a purely combinatorial framework, without assuming orientability. A hypermap is represented by three fixed-point-free involutions $(\tau_0,\tau_1,\tau_2)$ on its flag set. We first give an explicit construction of the medial map from this model: $02$-orbits become the medial vertex discs, while $\tau_1$-transpositions become the medial bands; a local orientation system and its twist data then provide a signed rotation description of the medial map. We next prove that the state circles associated with a chosen set of hyperedges are in natural bijection with the vertex orbits of the corresponding partial dual, yielding a crossing-total characterization of all Eulerian hyperedge partial duals. For bipartiteness, the twist data lead to a modified medial map in which inserted bars record the obstruction to a global orientation. We prove that a partial dual is bipartite if and only if its dualized hyperedge set is exactly the set of $c$-type hyperedges identified by an all-crossing orientation of this modified medial map. When the hypermap is orientable, these constructions specialize to the known orientable-hypermap results; when every hyperedge has valence two, they specialize to the ribbon-graph results.
\end{abstract}

\section{Introduction}

Partial duality is a local form of geometric duality. Chmutov introduced the notion for ribbon graphs~\cite{Chmutov2009}, and Chmutov and Vignes-Tourneret subsequently extended it to hypermaps~\cite{ChmutovVignes2022}. Structural studies of partial duals of ribbon graphs include the work of Moffatt~\cite{Moffatt2011,Moffatt2013}. For ribbon graphs, Huggett and Moffatt characterized bipartite partial duals of plane graphs by all-crossing orientations of the medial graph~\cite{HuggettMoffatt2013}. Metsidik and Jin later gave an Eulerian characterization for plane graphs~\cite{MetsidikJin2018}; Deng, Jin, and Metsidik extended both orientation characterizations to arbitrary ribbon graphs~\cite{DengJinMetsidik2020}. Their work shows that the modified medial graph is indispensable in the non-orientable bipartite case.

Combinatorial models for hypermaps go back to work of Cori, Walsh, Jones, Singerman, and others; background can be found in~\cite{Cori1975,Walsh1975,JonesSingerman1978,LandoZvonkin2004}. The framework encoding maps on surfaces by flags, fixed-point-free involutions, and edge-coloured graphs appears in~\cite{Lins1982,Vince1983}. Standard background on maps on surfaces, ribbon graphs, and their ribbon representations is given in~\cite{GrossTucker1987,EllisMonaghanMoffatt2013}. For orientable hypermaps, the permutation model $(\sigma,\alpha)$ conveniently describes hyperedge partial duality and medial maps; Cori and Hetyei gave a permutation construction of the medial map in that setting~\cite{CoriHetyei2025}. Han and Metsidik established orientation characterizations of Eulerian and bipartite hyperedge partial duals in this model~\cite[Theorems~5.3 and~6.5]{HanMetsidik2026}. However, the left--right $x/y$ labels used by the $(\sigma,\alpha)$-model and its bipartiteness proof depend on a global orientation of the underlying surface.

Our aim is to formulate both orientation characterizations uniformly for all hypermaps. We therefore use the $\tau$-model, also called the dart or flag model. It records the complete local cell structure and applies equally well on non-orientable surfaces. In particular, we give a direct construction of the medial map from this model and use local orientation systems to record how bands are attached relative to vertex discs.

The two problems have different sensitivities to orientability. The Eulerian characterization is controlled by alternating signs on state circles and requires no global left--right convention. The usual proof of the bipartite characterization does require such a convention. We replace it by local orientation systems on medial edges and their twist data; equivalently, every medial edge whose endpoint local orientations are incompatible is subdivided by an artificial bivalent bar. This is precisely the hypermap analogue of the modified medial map in~\cite{DengJinMetsidik2020}. Related background on twists and local duality data for non-orientable embedded graphs can be found in~\cite{EllisMonaghanMoffatt2012}.

The combinatorial relationship between Eulerian maps and hypermaps on orientable surfaces was studied by Jackson and Visentin~\cite{JacksonVisentin1999}. Here we study hyperedge partial duals of hypermaps on arbitrary surfaces.

All hypermaps in this paper are finite. To simplify notation, theorem statements and proofs are given for connected hypermaps; for disconnected hypermaps, the conclusions hold componentwise.

\section{Hypermaps and Hyperedge Partial Duals}

\subsection{The Flag Model}

\begin{definition}\label{def:tau-hypermap}
A \emph{hypermap} is a triple
\[
  \Hcal=(X;\tau_0,\tau_1,\tau_2),
\]
where \(X\) is a finite set, each \(\tau_i\) is a fixed-point-free involution on \(X\), and the group \(\langle\tau_0,\tau_1,\tau_2\rangle\) generated by these involutions acts transitively on \(X\). The elements of \(X\) are called \emph{flags}. For \(i,j\in\{0,1,2\}\), let
\[
  X/\langle\tau_i,\tau_j\rangle
\]
denote the set of orbits of the subgroup \(\langle\tau_i,\tau_j\rangle\) generated by \(\tau_i\) and \(\tau_j\). The sets of vertices, hyperedges, and faces of \(\Hcal\) are defined by
\[
  \Vcal(\Hcal)=X/\langle\tau_1,\tau_2\rangle,\qquad
  \Ecal(\Hcal)=X/\langle\tau_0,\tau_2\rangle,\qquad
  \Fcal(\Hcal)=X/\langle\tau_0,\tau_1\rangle,
\]
respectively.
\end{definition}

Geometrically, a \emph{flag} is a small triangle in the barycentric subdivision of the cell complex associated with \(\Hcal\). The involution \(\tau_i\) crosses the colour-\(i\) line, as shown in Figure~\ref{fig:tau-involutions}, where blue, red, and black represent \(0\), \(1\), and \(2\), respectively. Thus, at a flag \((v,e,f)\), the involutions \(\tau_0\), \(\tau_1\), and \(\tau_2\) change the vertex, hyperedge, and face components of the flag, respectively, while preserving the other two components. This convention agrees with~\cite[Section~1.2]{ChmutovVignes2022}.

\begin{figure}[htbp]
  \centering
  \includegraphics[width=0.60\textwidth]{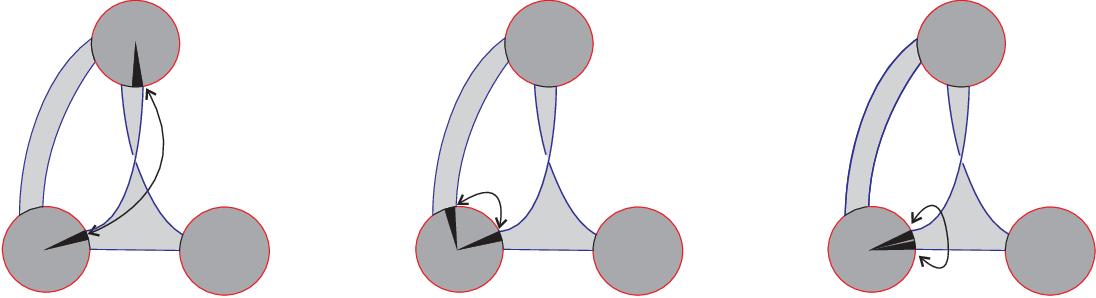}
  \put(-239,35){\footnotesize$\tau_0$}
  \put(-153,25){\footnotesize$\tau_1$}
  \put(-37,16){\footnotesize$\tau_2$}
  \caption{The three basic involutions in the $\tau$-model.}
  \label{fig:tau-involutions}
\end{figure}

Every orbit of two fixed-point-free involutions is an alternating cycle and hence has even cardinality. For \(v\in\Vcal(\Hcal)\), \(e\in\Ecal(\Hcal)\), and \(f\in\Fcal(\Hcal)\), define the corresponding valences by
\[
  \val(v)=\frac{|v|}{2},\qquad
  \val(e)=\frac{|e|}{2},\qquad
  \val(f)=\frac{|f|}{2}.
\]
The hypermap $\Hcal$ is called \emph{Eulerian} if every vertex has even valence.

We fix the following graph-theoretic definition of bipartiteness. The \emph{vertex-adjacency multigraph} $\Gamma(\Hcal)$ has vertex set $\Vcal(\Hcal)$; for every $\tau_0$-transposition $\{x,\tau_0x\}$, it has an edge whose endpoints are the vertex orbits containing $x$ and $\tau_0x$. Loops and multiple edges are allowed.

\begin{definition}\label{def:bipartite}
The hypermap $\Hcal$ is called \emph{bipartite} if $\Gamma(\Hcal)$ is bipartite. This agrees with the usual convention for bipartite hypermaps; see~\cite{Walsh1975,BredaDuarte2007}. Equivalently, there is a map $\chi:\Vcal(\Hcal)\to\{+1,-1\}$ such that
\[
  \chi(V(x))=-\chi(V(\tau_0x))\qquad(x\in X),
\]
where $V(x)$ denotes the $\langle\tau_1,\tau_2\rangle$-orbit containing $x$.
\end{definition}

For an ordinary map, every geometric edge is represented here by two parallel adjacency edges. This does not affect bipartiteness. In the orientable $(\sigma,\alpha)$-model, a $\tau_0$-transposition is exactly the adjacency $v(i)$--$v(\alpha(i))$ used in the usual definition of a bipartite hypermap.

\subsection{Hyperedge Partial Duality}

Let $A\subseteq\Ecal(\Hcal)$ be a set of hyperedges, and let
\[
  X_A=\bigcup_{e\in A}e
\]
be the union of their flag sets. Since $X_A$ is invariant under $\tau_0$ and $\tau_2$, the following definition is unambiguous.

\begin{definition}\label{def:partial-dual}
The \emph{partial dual} of $\Hcal=(X;\tau_0,\tau_1,\tau_2)$ with respect to $A$ is
\[
  \Hcal^A=(X;\tau_0^A,\tau_1,\tau_2^A),
\]
where, for every $x\in X$,
\[
 \tau_0^A(x)=
 \begin{cases}
   \tau_2(x),&x\in X_A,\\
   \tau_0(x),&x\notin X_A,
 \end{cases}
 \qquad
 \tau_2^A(x)=
 \begin{cases}
   \tau_0(x),&x\in X_A,\\
   \tau_2(x),&x\notin X_A.
 \end{cases}
\]
\end{definition}

\begin{lemma}\label{lem:partial-dual-hypermap}
For every $A\subseteq\Ecal(\Hcal)$, the triple
\[
  \Hcal^A=(X;\tau_0^A,\tau_1,\tau_2^A)
\]
is again a connected hypermap.
\end{lemma}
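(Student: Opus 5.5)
We must check three things: $\tau_0^A$ and $\tau_2^A$ are fixed-point-free involutions, and $\langle\tau_0^A,\tau_1,\tau_2^A\rangle$ acts transitively on $X$. The tool throughout is that $X_A$ and its complement $X\setminus X_A$ are both invariant under $\tau_0$ and $\tau_2$. This holds because $X_A$ is a union of $\langle\tau_0,\tau_2\rangle$-orbits, and those orbits partition $X$.

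\emph{Involution property.} Take $x\in X_A$. Then $\tau_2(x)\in X_A$, so
\[
\tau_0^A(\tau_0^A x)=\tau_0^A(\tau_2 x)=\tau_2\tau_2x=x .
\]
Take $x\notin X_A$. Then $\tau_0x\notin X_A$, so $\tau_0^A\tau_0^Ax=\tau_0\tau_0x=x$. Hence $\tau_0^A$ is an involution, and the same case split with $\tau_0$ and $\tau_2$ exchanged shows that $\tau_2^A$ is one too. Each map is fixed-point-free because on each piece it agrees with $\tau_0$ or $\tau_2$, neither of which has fixed points. Nothing is needed for $\tau_1$, which is unchanged.

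\emph{Transitivity.} Write $G=\langle\tau_0,\tau_1,\tau_2\rangle$ and $G^A=\langle\tau_0^A,\tau_1,\tau_2^A\rangle$. Since $G$ is transitive, it suffices to show that every $G^A$-orbit $O$ is $G$-invariant. Then $O$ is a nonempty $G$-invariant subset, so $O=X$.

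Clearly $\tau_1 O=O$. Take $x\in O$. If $x\in X_A$, then $\tau_0x=\tau_2^Ax\in O$ and $\tau_2x=\tau_0^Ax\in O$. If $x\notin X_A$, then $\tau_0x=\tau_0^Ax\in O$ and $\tau_2x=\tau_2^Ax\in O$. In either case $\tau_0x,\tau_2x\in O$, so $O$ is closed under all three generators of $G$. This gives transitivity, and hence $\Hcal^A$ is a connected hypermap.

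There is no serious obstacle. The only point needing care is the invariance of $X_A$ (and of its complement) under $\tau_0$ and $\tau_2$. Without it, the piecewise definitions could send a flag across the boundary of $X_A$, and the involution computation would fail.
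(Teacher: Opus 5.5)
Your proof is correct and follows essentially the same route as the paper. In both, the invariance of $X_A$ under $\tau_0,\tau_2$ gives the involution property, and transitivity comes from the pointwise identity $\{\tau_0^Ax,\tau_2^Ax\}=\{\tau_0x,\tau_2x\}$. You phrase that last step as every $\langle\tau_0^A,\tau_1,\tau_2^A\rangle$-orbit being closed under $\tau_0,\tau_1,\tau_2$, while the paper phrases it as the two flag graphs having the same uncoloured underlying graph.
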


\begin{proof}
Since $X_A$ is a union of $\langle\tau_0,\tau_2\rangle$-orbits, both it and its complement $X\setminus X_A$ are invariant under $\tau_0$ and $\tau_2$. Hence, on each of $X_A$ and $X\setminus X_A$, the restrictions of $\tau_0^A$ and $\tau_2^A$ are just the restrictions of the original involutions $\tau_0$ and $\tau_2$, possibly with their names interchanged. Thus $\tau_0^A$ and $\tau_2^A$ are fixed-point-free involutions.

View the transpositions of the three involutions as edges of colours $0,1,2$ in the flag graph. For every $x\in X$,
\[
 \bigl\{\tau_0^A(x),\tau_2^A(x)\bigr\}
 =
 \bigl\{\tau_0(x),\tau_2(x)\bigr\}.
\]
Thus the flag graphs of $\Hcal^A$ and $\Hcal$ have the same uncoloured underlying graph: the partial dual only swaps colours $0$ and $2$ inside the $02$-bubbles belonging to $X_A$, while colour-$1$ edges remain unchanged. Since the original flag graph is connected, so is the new one. Consequently $\langle\tau_0^A,\tau_1,\tau_2^A\rangle$ acts transitively on $X$, and $\Hcal^A$ is a connected hypermap.
\end{proof}

Thus the partial dual swaps colours $0$ and $2$ precisely in the selected $02$-bubbles and leaves colour $1$ unchanged. This is the hyperedge version of the $\tau$-model formula of Chmutov and Vignes-Tourneret; the direct flag formula is~\cite[Theorem~2.6]{ChmutovVignes2022}, and the colour-swapping formulation in the coloured flag graph is~\cite[Theorem~2.8]{ChmutovVignes2022}. The pointwise form in Definition~\ref{def:partial-dual} avoids ambiguities caused by different conventions for permutation composition.

\begin{proposition}\label{prop:basic-pd}
For any $A,B\subseteq\Ecal(\Hcal)$, the following hold:
\begin{enumerate}[label=\textup{(\roman*)}]
\item The hyperedge orbits of $\Hcal^A$ are naturally the same as those of $\Hcal$, and hyperedge valences are preserved;
\item $(\Hcal^A)^B=\Hcal^{A\triangle B}$; in particular, $(\Hcal^A)^A=\Hcal$.
\end{enumerate}
\end{proposition}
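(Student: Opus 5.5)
The plan is to work pointwise with Definition~\ref{def:partial-dual}, using two facts from the proof of Lemma~\ref{lem:partial-dual-hypermap}: $X_A$ is a union of $\langle\tau_0,\tau_2\rangle$-orbits, and on $X_A$ and on $X\setminus X_A$ the maps $\tau_0^A,\tau_2^A$ are the restrictions of $\tau_2,\tau_0$ or of $\tau_0,\tau_2$, respectively.

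For (i), I will show that $\langle\tau_0^A,\tau_2^A\rangle$ and $\langle\tau_0,\tau_2\rangle$ have the same orbits on $X$. Fix $x\in X$ and let $e$ be its $\langle\tau_0,\tau_2\rangle$-orbit. Since $e$ lies entirely in $X_A$ or entirely in its complement, for every $y\in e$ we have $\{\tau_0^A(y),\tau_2^A(y)\}=\{\tau_0(y),\tau_2(y)\}\subseteq e$. So $e$ is invariant under $\tau_0^A$ and $\tau_2^A$. The same set equality, applied in reverse, shows that every orbit of the new pair is invariant under $\tau_0$ and $\tau_2$. Hence the two orbit partitions coincide. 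In fact, on each $e$ the new pair is the old pair with names possibly swapped, and swapping the names of two generators does not change the group they generate. Equal orbits have equal cardinalities, so valences are preserved. This identifies $\Ecal(\Hcal^A)$ with $\Ecal(\Hcal)$ via the identity on flag sets, which is what makes $(\Hcal^A)^B$ meaningful for $B\subseteq\Ecal(\Hcal)$.

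For (ii), write $(\Hcal^A)^B=(X;(\tau_0^A)^B,\tau_1,(\tau_2^A)^B)$. By (i), the flag set $X_B$ computed in $\Hcal^A$ is the same subset of $X$ as in $\Hcal$. Since $A$ and $B$ are sets of orbits, $X_{A\triangle B}=X_A\triangle X_B$. I then check the cases pointwise. If $x\notin X_B$, then $(\tau_0^A)^B(x)=\tau_0^A(x)$. If $x\in X_B$, then $(\tau_0^A)^B(x)=\tau_2^A(x)$, which equals $\tau_0(x)$ when $x\in X_A$ and $\tau_2(x)$ otherwise. Going through the four cases, $(\tau_0^A)^B(x)=\tau_2(x)$ exactly when $x$ lies in exactly one of $X_A$ and $X_B$, and equals $\tau_0(x)$ otherwise. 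This is $\tau_0^{A\triangle B}(x)$, and the case analysis for $\tau_2$ is symmetric. The component $\tau_1$ is untouched. Taking $B=A$ gives $A\triangle A=\emptyset$, and $\Hcal^\emptyset=\Hcal$ directly from the definition.

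The main obstacle is bookkeeping rather than substance. One must make sure that the set $B$ is interpreted via the identification in (i), and that membership in $X_B$ is tested on the same flags in both hypermaps. Once (i) is established, (ii) reduces to a four-case table.
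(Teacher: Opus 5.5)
Your proposal is correct and takes essentially the same route as the paper. The paper's proof observes that on each hyperedge orbit the pair $(\tau_0,\tau_2)$ is either kept or swapped, which gives (i), and that two swaps cancel, which gives (ii). Your orbit-invariance argument, the identity $X_{A\triangle B}=X_A\triangle X_B$, and the four-case table are a careful write-up of exactly these observations, including the identification of $\Ecal(\Hcal^A)$ with $\Ecal(\Hcal)$ that the paper leaves implicit.
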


\begin{proof}
On each hyperedge orbit, Definition~\ref{def:partial-dual} either leaves the pair $(\tau_0,\tau_2)$ unchanged or swaps its two members. This proves (i). Performing the same swap twice cancels it, giving (ii).
\end{proof}

\section{Medial Maps and States}

We fix the following running example for the remainder of the paper. 
\begin{example}
Let $X=\{1,2,\ldots,16\}$, and define
\[
\begin{aligned}
\tau_0&=(1\ 8)(2\ 3)(4\ 5)(6\ 7)(9\ 12)(10\ 11)(13\ 16)(14\ 15),\\
\tau_1&=(1\ 14)(2\ 13)(3\ 12)(4\ 8)(5\ 10)(6\ 16)(7\ 11)(9\ 15),\\
\tau_2&=(1\ 2)(3\ 4)(5\ 6)(7\ 8)(9\ 10)(11\ 12)(13\ 14)(15\ 16).
\end{aligned}
\]
Denote the resulting hypermap by $\Hcal_0=(X;\tau_0,\tau_1,\tau_2)$. Its three vertices are represented by the cyclic orders
\[
v=(1,2,13,14),\qquad
u=(3,4,8,7,11,12),\qquad
w=(5,6,16,15,9,10),
\]
its three hyperedges are
\[
h=\{1,2,3,4,5,6,7,8\},\qquad
a=\{9,10,11,12\},\qquad
b=\{13,14,15,16\},
\]
and its unique face is $X$. The cycles on the vertex discs are read counterclockwise; hence the three discs in Figure~\ref{fig:example-hypermap} correspond to $v$, $u$, and $w$, with the cyclic orders listed above. The valences of $h$, $a$, and $b$ are $4$, $2$, and $2$, respectively.
\end{example}

\begin{figure}[htbp]
  \centering
  \setlength{\unitlength}{0.001\textwidth}
  \begin{picture}(620,398)
    \put(0,0){\includegraphics[width=0.62\textwidth]{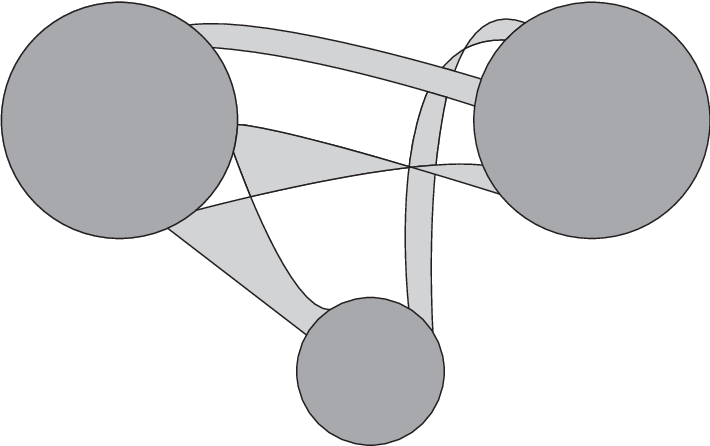}}
    \put(104,284){\makebox(0,0)[c]{\footnotesize$u$}}
    \put(324,65){\makebox(0,0)[c]{\footnotesize$v$}}
    \put(516,284){\makebox(0,0)[c]{\footnotesize$w$}}
    \put(287,105){\makebox(0,0)[c]{\scriptsize$1$}}
    \put(274,94){\makebox(0,0)[c]{\scriptsize$2$}}
    \put(370,90){\makebox(0,0)[c]{\scriptsize$13$}}
    \put(350,110){\makebox(0,0)[c]{\scriptsize$14$}}
    \put(143,196){\makebox(0,0)[c]{\scriptsize$3$}}
    \put(162,214){\makebox(0,0)[c]{\scriptsize$4$}}
    \put(190,250){\makebox(0,0)[c]{\scriptsize$8$}}
    \put(193,276){\makebox(0,0)[c]{\scriptsize$7$}}
    \put(178,337){\makebox(0,0)[c]{\scriptsize$11$}}
    \put(160,355){\makebox(0,0)[c]{\scriptsize$12$}}
    \put(426,248){\makebox(0,0)[c]{\scriptsize$5$}}
     \put(442,220){\makebox(0,0)[c]{\scriptsize$6$}}
    \put(426,290){\makebox(0,0)[c]{\scriptsize$10$}}
    \put(426,310){\makebox(0,0)[c]{\scriptsize$9$}}
    \put(442,345){\makebox(0,0)[c]{\scriptsize$15$}}
    \put(460,360){\makebox(0,0)[c]{\scriptsize$16$}}
    \put(365,200){\makebox(0,0)[c]{\footnotesize$b$}}
    \put(315,330){\makebox(0,0)[c]{\footnotesize$a$}}
    \put(205,166){\makebox(0,0)[c]{\footnotesize$h$}}
  \end{picture}
  \caption{The hypermap $\Hcal_0$.}
  \label{fig:example-hypermap}
\end{figure}

\subsection{The Medial Map in the $\tau$-Model}

We now construct the medial map entirely within the $\tau$-model. It is important to distinguish the flag set of the original hypermap from the flag set of the medial map itself: the former becomes the label set of medial half-edges in the latter, but the two sets are not identical.

Let $\mathsf b$ and $\mathsf w$ be two symbols, and let $\bar c$ denote the symbol different from $c\in\{\mathsf b,\mathsf w\}$. Set
\[
  Y=X\times\{\mathsf b,\mathsf w\}.
\]
Write $Y_{\mathsf b}=X\times\{\mathsf b\}$ and $Y_{\mathsf w}=X\times\{\mathsf w\}$. Define three involutions on $Y$ by
\[
\begin{aligned}
 \mu_0(x,c)&=(\tau_1x,c),\\
 \mu_2(x,c)&=(x,\bar c),\\
 \mu_1(x,\mathsf b)&=(\tau_2x,\mathsf b),\\
 \mu_1(x,\mathsf w)&=(\tau_0x,\mathsf w).
\end{aligned}
\tag{M}\label{eq:tau-medial}
\]

\begin{definition}\label{def:tau-medial}
Let $\Hcal=(X;\tau_0,\tau_1,\tau_2)$ be a hypermap. Its \emph{$\tau$-model representation of the medial map} is defined by
\[
 M_\tau(\Hcal)=(Y;\mu_0,\mu_1,\mu_2),
\]
where $Y$ and the $\mu_i$ are given by~\eqref{eq:tau-medial}. In what follows we write $M(\Hcal)$ for $M_\tau(\Hcal)$.
\end{definition}

\begin{example}
The medial map $M(\Hcal_0)$ of the hypermap $\Hcal_0$ has three medial vertices $v_h$, $v_a$, and $v_b$, corresponding to the hyperedges $h$, $a$, and $b$, whose counterclockwise half-edge orders are
\[
\begin{aligned}
v_h&=(\bar{1},\bar{2},\bar{3},\bar{4},\bar{5},\bar{6},\bar{7},\bar{8}),\qquad
v_a=(\bar{9},\bar{10},\bar{11},\bar{12}),\qquad
v_b=(\bar{13},\bar{14},\bar{15},\bar{16}),
\end{aligned}
\]
where $\bar x=\{x_{\mathsf b},x_{\mathsf w}\}$. It has eight medial edges
\[
m_{1,14},\,m_{2,13},\,m_{3,12},\,m_{4,8},\,
m_{5,10},\,m_{6,16},\,m_{7,11},\,m_{9,15},
\]
where $m_{4,8}$ is a loop at $v_h$; $m_{1,14}$, $m_{2,13}$, and $m_{6,16}$ join $v_h$ to $v_b$; $m_{3,12}$, $m_{5,10}$, and $m_{7,11}$ join $v_h$ to $v_a$; and $m_{9,15}$ joins $v_a$ to $v_b$.
Figure~\ref{fig:example-medial} depicts the three medial vertices and eight medial edges in this order. 
\end{example}

\begin{figure}[htbp]
  \centering
  \setlength{\unitlength}{0.001\textwidth}
  \begin{picture}(720,541)
    \put(0,0){\includegraphics[width=0.72\textwidth]{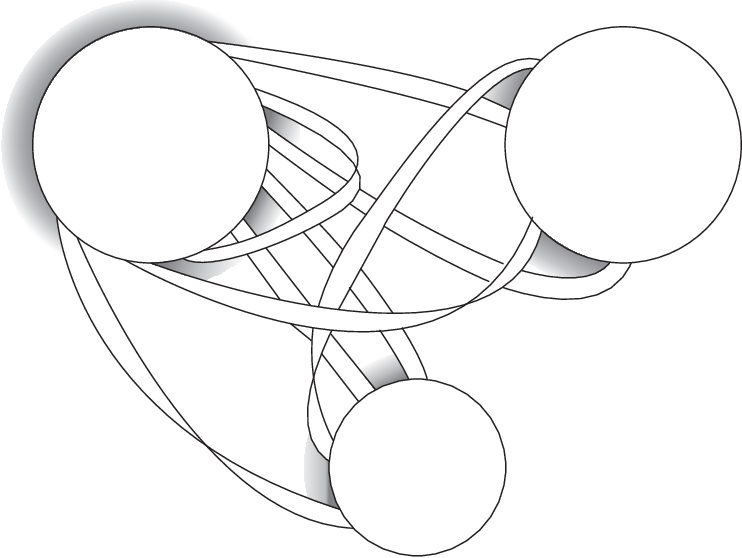}}
    \put(140,408){\makebox(0,0)[c]{\footnotesize$v_h$}}
    \put(600,408){\makebox(0,0)[c]{\footnotesize$v_a$}}
    \put(396,90){\makebox(0,0)[c]{\footnotesize$v_b$}}
    % Counterclockwise half-edge labels at v_h.
    \put(220,330){\makebox(0,0)[c]{\scriptsize$\bar{1}$}}
    \put(240,370){\makebox(0,0)[c]{\scriptsize$\bar{2}$}}
    \put(245,410){\makebox(0,0)[c]{\scriptsize$\bar{3}$}}
    \put(235,445){\makebox(0,0)[c]{\scriptsize$\bar{4}$}}
    \put(200,485){\makebox(0,0)[c]{\scriptsize$\bar{5}$}}
    \put(70,330){\makebox(0,0)[c]{\scriptsize$\bar{6}$}}
    \put(120,305){\makebox(0,0)[c]{\scriptsize$\bar{7}$}}
    \put(185,310){\makebox(0,0)[c]{\scriptsize$\bar{8}$}}
    % Counterclockwise half-edge labels at v_a and v_b.
    \put(530,475){\makebox(0,0)[c]{\scriptsize$\bar{9}$}}
    \put(505,424){\makebox(0,0)[c]{\scriptsize$\bar{10}$}}
    \put(520,340){\makebox(0,0)[c]{\scriptsize$\bar{11}$}}
    \put(600,300){\makebox(0,0)[c]{\scriptsize$\bar{12}$}}
    \put(405,163){\makebox(0,0)[c]{\scriptsize$\bar{13}$}}
    \put(365,150){\makebox(0,0)[c]{\scriptsize$\bar{14}$}}
    \put(330,105){\makebox(0,0)[c]{\scriptsize$\bar{15}$}}
    \put(345,45){\makebox(0,0)[c]{\scriptsize$\bar{16}$}}
  \end{picture}
  \caption{The medial map $M(\Hcal_0)$ of $\Hcal_0$, with the canonical checkerboard coloring around its vertices.}
  \label{fig:example-medial}
\end{figure}

\begin{lemma}\label{lem:medial-incidences}
Let $Y/\langle\mu_2\rangle$ denote the set of orbits of \(\langle\mu_2\rangle\). Then the map
\[
  \iota:X\longrightarrow Y/\langle\mu_2\rangle,\qquad
  \iota(x)=[(x,\mathsf b)]_{\langle\mu_2\rangle}
  =\{(x,\mathsf b),(x,\mathsf w)\}
\]
is a bijection.

More precisely, let
\(\orb_{\langle\tau,\tau'\rangle}(x)\) be the orbit of the flag \(x\) under the group generated by \(\tau\) and \(\tau'\).

Then
\begin{align}
  \orb_{\langle\mu_1,\mu_2\rangle}(x,\mathsf b)
  &=e\times\{\mathsf b,\mathsf w\},\label{eq:medial-vertex-orbit}\\
  \orb_{\langle\mu_0,\mu_2\rangle}(x,\mathsf b)
  &=\{(x,\mathsf b),(x,\mathsf w),
      (\tau_1x,\mathsf b),(\tau_1x,\mathsf w)\},
      \label{eq:medial-edge-orbit}
\end{align}
where \(e=\orb_{\langle\tau_0,\tau_2\rangle}(x)\).
\end{lemma}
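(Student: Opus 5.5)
The plan is to verify the three claims directly from the formulas in~\eqref{eq:tau-medial}. For the bijection $\iota$, note that $\mu_2$ changes only the colour coordinate. Hence every $\langle\mu_2\rangle$-orbit has the form $\{(x,\mathsf b),(x,\mathsf w)\}$ for a unique $x\in X$. Consequently $\iota$ is well defined and surjective, and it is injective because $x$ is recovered as the first coordinate. This step is immediate.

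For~\eqref{eq:medial-vertex-orbit} I would prove two inclusions. For $\subseteq$: the set $e\times\{\mathsf b,\mathsf w\}$ is invariant under $\mu_2$, since the first coordinate is unchanged. It is also invariant under $\mu_1$, since $\mu_1$ applies $\tau_2$ or $\tau_0$ to the first coordinate, and $e$ is a $\langle\tau_0,\tau_2\rangle$-orbit. Since this set contains $(x,\mathsf b)$, it contains the whole orbit.

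For $\supseteq$, let $y\in e$, so that $y=\tau_{i_k}\cdots\tau_{i_1}x$ with each $i_j\in\{0,2\}$. I would realise each letter inside $\langle\mu_1,\mu_2\rangle$:
\begin{itemize}
\item $\tau_2$ is realised on the $\mathsf b$-layer by $\mu_1$;
\item $\tau_0$ is realised on the $\mathsf b$-layer by $\mu_2\mu_1\mu_2$, since this sends $(z,\mathsf b)\mapsto(z,\mathsf w)\mapsto(\tau_0z,\mathsf w)\mapsto(\tau_0z,\mathsf b)$.
\end{itemize}
Applying these words in turn shows that $(y,\mathsf b)$ lies in the orbit of $(x,\mathsf b)$. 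One further application of $\mu_2$ then gives $(y,\mathsf w)$.

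For~\eqref{eq:medial-edge-orbit} the point is that $\mu_0$ and $\mu_2$ commute, because they act on different coordinates and $\mu_0$ preserves the colour. Hence $\langle\mu_0,\mu_2\rangle$ is a quotient of the Klein four-group, and the orbit of $(x,\mathsf b)$ is
\[
\{(x,\mathsf b),\ \mu_2(x,\mathsf b),\ \mu_0(x,\mathsf b),\ \mu_0\mu_2(x,\mathsf b)\},
\]
which is exactly the displayed set.

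I would add that these four elements are distinct, because $\tau_1$ is fixed-point-free, so $\tau_1x\ne x$. This remark is not required for the set equality, but it is what makes each medial edge have two ends. I expect no real obstacle. The only point needing care is the conjugation trick $\mu_2\mu_1\mu_2$ in the $\supseteq$ direction of~\eqref{eq:medial-vertex-orbit}, which is how the $\mathsf w$-layer supplies $\tau_0$ while the $\mathsf b$-layer supplies $\tau_2$.
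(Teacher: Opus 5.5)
Your proposal is correct and follows essentially the same route as the paper: the bijection comes from $\mu_2$ changing only the colour coordinate, and~\eqref{eq:medial-vertex-orbit} is proved via the conjugate $\mu_2\mu_1\mu_2$ acting as $\tau_0$ on the $\mathsf b$-layer together with the invariance of $e\times\{\mathsf b,\mathsf w\}$ under $\mu_1$ and $\mu_2$. Likewise~\eqref{eq:medial-edge-orbit} follows because $\mu_0$ and $\mu_2$ commute, and your word-by-word argument for the inclusion $\supseteq$ and your distinctness remark only spell out what the paper leaves implicit.
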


\begin{proof}
Since $\mu_2(x,\mathsf b)=(x,\mathsf w)$ and $\mu_2$ is fixed-point-free, $\iota$ is plainly a bijection.

From
\[
 \mu_1(x,\mathsf b)=(\tau_2x,\mathsf b),\qquad\mu_2\mu_1(x,\mathsf b)=(\tau_2x,\mathsf w),\qquad
 \mu_1\mu_2(x,\mathsf b)=(\tau_0x,\mathsf w),\qquad
 \mu_2\mu_1\mu_2(x,\mathsf b)=(\tau_0x,\mathsf b)
\]
the $\langle\mu_1,\mu_2\rangle$-orbit contains $e\times\{\mathsf b,\mathsf w\}$. Conversely, $\mu_1$ applies only $\tau_2$ on the $\mathsf b$-layer and only $\tau_0$ on the $\mathsf w$-layer, while $\mu_2$ does not change the first coordinate. Thus the orbit cannot leave $e\times\{\mathsf b,\mathsf w\}$, proving
\eqref{eq:medial-vertex-orbit}.

Moreover, $\mu_0$ and $\mu_2$ commute, and $\mu_0$ applies $\tau_1$ to the first coordinate, so~\eqref{eq:medial-edge-orbit} follows.
\end{proof}

\begin{proposition}\label{prop:tau-medial}
The $M(\Hcal)$ defined in Definition~\ref{def:tau-medial} is a map; that is, every $\langle\mu_0,\mu_2\rangle$-orbit contains exactly four flags. Its cells have the following natural correspondence with the data of $\Hcal$:
\[
\begin{array}{c|c}
\text{The $\tau$-data of $\Hcal$} & \text{Object in $M(\Hcal)$}\\ \hline
 e\in X/\langle\tau_0,\tau_2\rangle & \text{a medial vertex }v_e\\
 x\in e & \text{a medial half-edge at $v_e$ labelled by $x$}\\
 \{x,\tau_1x\} & \text{a medial edge}\\
 X/\langle\tau_1,\tau_2\rangle & \text{a black face}\\
 X/\langle\tau_1,\tau_0\rangle & \text{a white face}.
\end{array}
\]
In particular, if $e$ has valence $k$, then $v_e$ has degree $2k$. At $v_e$, the black angles project to pairs $\{x,\tau_2x\}$ and the white angles to pairs $\{x,\tau_0x\}$. Thus $M(\Hcal)$ has the canonical checkerboard colouring.
\end{proposition}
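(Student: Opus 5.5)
We prove the proposition by checking, in turn, each of the orbit computations it asserts, taking Lemma~\ref{lem:medial-incidences} as the starting point.

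\textbf{Step 1: $M(\Hcal)$ is a map.} First we confirm that $(Y;\mu_0,\mu_1,\mu_2)$ is a hypermap in the sense of Definition~\ref{def:tau-hypermap}.

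Each $\mu_i$ is a fixed-point-free involution:
\begin{itemize}
\item $\mu_2$ changes the colour coordinate;
\item $\mu_0$ and $\mu_1$ preserve the colour and act by the fixed-point-free involutions $\tau_1$, respectively $\tau_2$ or $\tau_0$, on the first coordinate.
\end{itemize}

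For transitivity, note that $\mu_1\mu_2\mu_1\mu_2$-type words realise $\tau_0$ and $\tau_2$ on the $\mathsf b$-layer, as in the proof of Lemma~\ref{lem:medial-incidences}. Also $\mu_0$ realises $\tau_1$ on that layer. Connectedness of $\Hcal$ then gives transitivity on $Y_{\mathsf b}$, and $\mu_2$ reaches $Y_{\mathsf w}$.

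The map condition, that every $\langle\mu_0,\mu_2\rangle$-orbit has four flags, is \eqref{eq:medial-edge-orbit}. The four elements listed there are distinct because $\tau_1$ is fixed-point-free, so $\tau_1x\neq x$.

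\textbf{Step 2: vertices, half-edges and edges.} This reads off the first three rows of the table.

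Vertices of $M(\Hcal)$ are the $\langle\mu_1,\mu_2\rangle$-orbits. By \eqref{eq:medial-vertex-orbit}, these are exactly the sets $e\times\{\mathsf b,\mathsf w\}$ with $e\in\Ecal(\Hcal)$, and $e\mapsto v_e$ is a bijection.

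Half-edges are $\mu_2$-orbits, which correspond bijectively to flags $x$ via $\iota$. The half-edge $\iota(x)$ lies in $v_e$ precisely when $x\in e$.

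Edges are $\langle\mu_0,\mu_2\rangle$-orbits. These equal $\iota(x)\cup\iota(\tau_1x)$, so they correspond to the transpositions $\{x,\tau_1x\}$.

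The degree of $v_e$ is $|e\times\{\mathsf b,\mathsf w\}|/2=|e|=2\val(e)$.

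\textbf{Step 3: faces and angles.} Faces of $M(\Hcal)$ are $\langle\mu_0,\mu_1\rangle$-orbits. Both generators preserve the colour, so every face lies entirely in $Y_{\mathsf b}$ or entirely in $Y_{\mathsf w}$. We declare these faces black and white respectively.

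On $Y_{\mathsf b}$ the group acts as $\langle\tau_1,\tau_2\rangle$ on the first coordinate, so black faces are in bijection with $\Vcal(\Hcal)$. On $Y_{\mathsf w}$ it acts as $\langle\tau_1,\tau_0\rangle$, so white faces are in bijection with $\Fcal(\Hcal)$.

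An angle at $v_e$ is a $\mu_1$-orbit $\{(x,c),\mu_1(x,c)\}$ inside $v_e$. Projected to $X$, this is $\{x,\tau_2x\}$ when $c=\mathsf b$ and $\{x,\tau_0x\}$ when $c=\mathsf w$. Each angle lies in the face containing it, so black angles sit in black faces and white angles in white faces.

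\textbf{Step 4: the checkerboard colouring.} It remains to see that the two faces on the sides of any medial edge have opposite colours. Across an edge the flags are exchanged by $\mu_2$, which flips the colour. Equivalently, around $v_e$ the angles alternate: the rotation is generated by $\mu_1\mu_2$, and successive $\mu_1$-pairs lie alternately in $Y_{\mathsf b}$ and $Y_{\mathsf w}$.

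The only point needing care is that these identifications are well defined, which relies on $\mu_0$, $\mu_1$ preserving colour and $\mu_2$ flipping it. None of the steps presents a real obstacle; the work is bookkeeping the orbit computations, with Step~3 the main one.
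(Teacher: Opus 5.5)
Your proposal is correct and follows essentially the same route as the paper. Both derive the map condition and the vertex, half-edge and edge correspondences from Lemma~\ref{lem:medial-incidences}, and both read off the black and white faces and angles from two facts: $\mu_0,\mu_1$ preserve the layer, and $\mu_1$ acts by $\tau_2$ on $Y_{\mathsf b}$ and by $\tau_0$ on $Y_{\mathsf w}$. Your Step~4 argument, that $\mu_2$ flips the colour across each medial edge so the angles alternate around $v_e$, makes explicit a point the paper only asserts.
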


\begin{proof}
The three maps $\mu_i$ are fixed-point-free involutions. Starting with a flag $(x,\mathsf b)$, the maps $\mu_0$, $\mu_1$, and $\mu_2\mu_1\mu_2$ act on the first coordinate by $\tau_1$, $\tau_2$, and $\tau_0$, respectively, while $\mu_2$ changes the second coordinate. Hence the connectedness of $\Hcal$ implies that $M(\Hcal)$ is connected. Also, $\mu_0$ and $\mu_2$ commute. By~\eqref{eq:medial-edge-orbit}, every edge of $M(\Hcal)$ consists of four flags, so $M(\Hcal)$ is a map; moreover, its edges are in bijection with the $\tau_1$-transpositions. By~\eqref{eq:medial-vertex-orbit},
\[
 Y/\langle\mu_1,\mu_2\rangle
 \cong X/\langle\tau_0,\tau_2\rangle.
\]
This gives the correspondence between the medial vertices and the hyperedges of the original hypermap. Since $M(\Hcal)$ has been shown to be a map, $\mu_2$ preserves its vertices and edges while interchanging the two face-side flags; hence
$Y/\langle\mu_2\rangle$ is precisely its set of half-edges. By Lemma~\ref{lem:medial-incidences}, $\iota:X\to\mathcal I(M(\Hcal))$ is a bijection, and the half-edge set at $v_e$ is precisely $\{\iota(z):z\in e\}$. Therefore, if $\val_{\Hcal}(e)=k$, then
\[
  \deg_{M(\Hcal)}(v_e)=|e|=2k.
\]

Since $\mu_0$ preserves the second coordinate, whereas $\mu_1$ acts by $\tau_2$ on the $\mathsf b$-layer and by $\tau_0$ on the $\mathsf w$-layer,
\[
 Y_{\mathsf b}/\langle\mu_0,\mu_1\rangle
 \cong X/\langle\tau_1,\tau_2\rangle,
\]
and
\[
 Y_{\mathsf w}/\langle\mu_0,\mu_1\rangle
 \cong X/\langle\tau_1,\tau_0\rangle.
\]
Thus the former is a black face and the latter a white face. At a medial vertex, a black face turns according to the action of $\mu_1$ on the $\mathsf b$-layer, hence via a $\tau_2$-pair after projection; the white case is analogous, with $\tau_0$. This proves the assertions about angles and the checkerboard colouring. 
\end{proof}
Notice that, when the hypermap is orientable, the construction of the medial map given here coincides with the permutation construction of Cori--Hetyei~\cite[Definition~3.1]{CoriHetyei2025}.

The bijection $\iota$ of Lemma~\ref{lem:medial-incidences} says precisely that, after forgetting the second coordinate of $Y$, $X$ is the label set of the medial half-edges of $M(\Hcal)$. Geometrically, in the flag triangulation of $\Hcal$, take a small closed neighbourhood of each $02$-cell as a medial vertex disc and attach a narrow medial band across every colour-$1$ flag-graph edge $\{x,\tau_1x\}$; its two ends are labelled by $x$ and $\tau_1x$. Its black and white sides turn at vertex discs according to the $\tau_2$- and $\tau_0$-pairings, respectively, so the flag involutions of the resulting ribbon graph are exactly~\eqref{eq:tau-medial}. Thus $M(\Hcal)$ is not merely a combinatorial map but the geometric medial map of $\Hcal$. In the non-orientable case, an $02$-cycle provides only an unoriented cyclic order; the local orientation systems below record how bands are glued relative to the vertex discs.

\subsection{States}

Recall from Lemma~\ref{lem:medial-incidences} that every flag of the original hypermap corresponds to one medial half-edge. Although the state circles are drawn on the medial
map, we describe them using the flag set $X$ through this bijection.

The involutions $\mu_0,\mu_1,\mu_2$ act on the flag set $Y$ of the medial
map, whereas $\tau_0,\tau_1,\tau_2$ act on the flag set $X$ of the original
hypermap. Under the identification given by $\iota$, a
$\tau_1$-transposition is a medial edge. The involutions $\tau_0$ and
$\tau_2$ describe the two possible turns at a medial vertex.

We now explain which of these two turns is the black smoothing and which is
the white smoothing. By the definition of the medial map,
 $\mu_1(x,\mathsf b)=(\tau_2x,\mathsf b),\mu_1(x,\mathsf w)=(\tau_0x,\mathsf w)$. The two layers $\mathsf b$ and $\mathsf w$ represent the two sides of the
medial map. Furthermore, by Proposition~\ref{prop:tau-medial},
$ Y_{\mathsf b}/\langle\mu_0,\mu_1\rangle
 \cong X/\langle\tau_1,\tau_2\rangle$ is the set of black faces, while $Y_{\mathsf w}/\langle\mu_0,\mu_1\rangle
 \cong X/\langle\tau_1,\tau_0\rangle$ is the set of white faces.

A smoothing at a medial vertex joins two incident half-edges which are
connected through one side of the medial map. On the black side, the
connection is determined by $\tau_2$, since $\mu_1$ acts as $\tau_2$ on the
$\mathsf b$-layer. On the white side, the connection is determined by
$\tau_0$, since $\mu_1$ acts as $\tau_0$ on the $\mathsf w$-layer. Therefore,
\[
 \tau_2 \text{ corresponds to the black smoothing, whereas } \tau_0 \text{ corresponds to the white smoothing}.
\]

Define an involution $\eta_A$ on $X$ by
\[
 \eta_A(x)=
 \begin{cases}
   \tau_0(x),&x\in X_A,\\
   \tau_2(x),&x\notin X_A.
 \end{cases}
\]
The reason for this choice is the definition of hyperedge partial duality.
Notice that $\eta_A=\tau_2^A$ as involutions on the original flag set $X$.
Geometrically, this means that the state uses the white smoothing at every
hyperedge in $A$ and the black smoothing at every hyperedge outside $A$.

\begin{definition}\label{def:state}
For $A\subseteq\Ecal(\Hcal)$, the \emph{state} $S_A$ is the edge-coloured
multigraph on $X$ whose two edge classes are the transpositions of
$\tau_1$ and the transpositions of $\eta_A$.

Using the bijection $\iota$, the same graph may be regarded as a graph on the
half-edge set of the medial map. In this interpretation, the $\tau_1$-edges
are the medial edges, and the $\eta_A$-edges are the chosen smoothing
connections at medial vertices. The connected components of $S_A$ are
called \emph{state circles}.

If a $\tau_1$-transposition and an $\eta_A$-transposition have the same
endpoints, they are still regarded as two parallel edges of different
colours. For a state circle $C$, let $E_{\tau_1}(C)$ be the set of all
$\tau_1$-edges in $C$, and define
\[
 \ell(C):=\left|E_{\tau_1}(C)\right|.
\]
\end{definition}

Since both $\tau_1$ and $\eta_A$ are fixed-point-free, every flag of $S_A$
is incident with exactly one edge of each colour. Hence every connected
component of $S_A$ is a cycle with alternating edge colours. We allow
$\ell(C)=1$; in that case, $C$ is a $2$-cycle consisting of two parallel
edges of different colours.

\begin{theorem}\label{thm:state-vertices}
The state circles of $S_A$ are naturally in bijection with the vertices of
$\Hcal^A$. If $C$ is a state circle and $v_C$ is the corresponding vertex,
then
\[
 \val_{\Hcal^A}(v_C)=\ell(C).
\]
\end{theorem}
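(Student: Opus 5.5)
The plan is to use the observation already made before Definition~\ref{def:state}: $\eta_A=\tau_2^A$ as involutions on $X$. The vertices of $\Hcal^A=(X;\tau_0^A,\tau_1,\tau_2^A)$ are, by Definition~\ref{def:tau-hypermap} applied to $\Hcal^A$, the orbits of $\langle\tau_1,\tau_2^A\rangle=\langle\tau_1,\eta_A\rangle$ on $X$. On the other hand, the state $S_A$ is by definition the edge-coloured multigraph on the same vertex set $X$ whose edges are the transpositions of $\tau_1$ and of $\eta_A$. So the proof reduces to comparing connected components of a graph with orbits of the group generated by the involutions that define its edges.

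\emph{Step 1 (the bijection).} Two flags $x,y$ lie in the same component of $S_A$ if and only if there is a path from $x$ to $y$, i.e.\ a word $y=\rho_k\cdots\rho_1x$ with each $\rho_i\in\{\tau_1,\eta_A\}$. Since both generators are involutions, every element of $\langle\tau_1,\eta_A\rangle$ is such a word. Hence the components of $S_A$ are exactly the $\langle\tau_1,\tau_2^A\rangle$-orbits. The map $C\mapsto v_C:=V(C)$, sending a state circle to its vertex set, is therefore a bijection onto $\Vcal(\Hcal^A)$. It is natural because it is the identity on underlying flag sets. Transported through $\iota$ (Lemma~\ref{lem:medial-incidences}), it identifies each state circle drawn on $M(\Hcal)$ with a vertex orbit of the partial dual.

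\emph{Step 2 (valence).} As noted after Definition~\ref{def:state}, every flag meets exactly one $\tau_1$-edge and one $\eta_A$-edge, so $C$ is an alternating cycle on the flag set $v_C$. An alternating cycle with $|v_C|$ vertices has $|v_C|$ edges, half of each colour. This includes the degenerate case $\ell(C)=1$, where $C$ is a $2$-cycle made of two parallel edges of different colours, with $|v_C|=2$. Hence $\ell(C)=|v_C|/2=\val_{\Hcal^A}(v_C)$ by the definition of valence.

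\emph{Main obstacle.} The argument is essentially bookkeeping. The only delicate point is the convention for parallel edges: a $\tau_1$-transposition equal to an $\eta_A$-transposition must be counted as two distinct edges, as Definition~\ref{def:state} stipulates, so that the edge count of $C$ is $|v_C|$ and not $|v_C|-1$. I would handle this by counting $\tau_1$-edges directly. $\tau_1$ restricts to a fixed-point-free involution on the $\tau_1$-invariant set $v_C$, so it contributes exactly $|v_C|/2$ transpositions, and no multigraph subtlety arises.
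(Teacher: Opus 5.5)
Your proposal is correct and follows essentially the same route as the paper: you use $\eta_A=\tau_2^A$ to identify the vertices of $\Hcal^A$ with the $\langle\tau_1,\eta_A\rangle$-orbits, match these orbits with the components of $S_A$, and get the valence from $|C|=2|E_{\tau_1}(C)|$, since $\tau_1$ pairs off the flags of $C$. Your direct count of $\tau_1$-transpositions in the last paragraph is exactly the paper's argument, and it avoids the parallel-edge convention altogether.
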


\begin{proof}
Since $\tau_2^A=\eta_A$, the vertices of $\Hcal^A$ are exactly the $\langle\tau_1,\eta_A\rangle$-orbits in $X$. Moreover, because the edges of $S_A$ are precisely the transpositions occurring in $\tau_1$ and $\eta_A$, these orbits coincide with the connected components of $S_A$. Hence the state circles are in natural bijection with the vertices of $\Hcal^A$.

Let $C$ be one such component. Since $\tau_1$ is fixed-point-free, its
transpositions partition the flags of $C$ into disjoint pairs. Hence
\[
 |C|=2\left|E_{\tau_1}(C)\right|.
\]
The flag set of the corresponding vertex $v_C$ is $C$. Therefore,
\[
 \val_{\Hcal^A}(v_C)
 =\frac{|C|}{2}
 =\left|E_{\tau_1}(C)\right|
 =\ell(C).
\]
\end{proof}

\begin{corollary}\label{cor:even-state}
The partial dual $\Hcal^A$ is Eulerian if and only if every state circle of
$S_A$ has even length.
\end{corollary}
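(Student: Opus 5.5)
This corollary follows directly from Theorem~\ref{thm:state-vertices} together with the definition of an Eulerian hypermap. By definition, $\Hcal^A$ is Eulerian if and only if every vertex of $\Hcal^A$ has even valence. The first step is to invoke Theorem~\ref{thm:state-vertices}. It supplies a bijection $C\mapsto v_C$ from the state circles of $S_A$ onto $\Vcal(\Hcal^A)$, and it shows that $\val_{\Hcal^A}(v_C)=\ell(C)$. By Lemma~\ref{lem:partial-dual-hypermap}, $\Hcal^A$ is a genuine hypermap, so its vertices are the $\langle\tau_1,\tau_2^A\rangle$-orbits and their valences are defined as in Definition~\ref{def:tau-hypermap}.

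The second step is to transport the parity condition across this bijection. Since the correspondence is surjective onto the vertex set, the condition ``every vertex of $\Hcal^A$ has even valence'' is equivalent to ``$\ell(C)$ is even for every state circle $C$''. Injectivity is not needed for this; surjectivity and the valence identity suffice.

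The only point needing care is what ``length'' of a state circle means. I will read it as $\ell(C)$, the number of $\tau_1$-edges in $C$, matching Definition~\ref{def:state}. If length is instead taken to be the total number of edges of the alternating cycle $C$, that number is $2\ell(C)$, which is always even. So this reading cannot be intended, and I will state explicitly that length means $\ell(C)$.

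With that convention fixed, the argument is a two-line consequence of Theorem~\ref{thm:state-vertices} and presents no real obstacle.
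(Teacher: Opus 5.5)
Your proposal is correct and is essentially the paper's argument: both transfer the parity condition across the correspondence of Theorem~\ref{thm:state-vertices} via the identity $\val_{\Hcal^A}(v_C)=\ell(C)$. Your reading of ``length'' as $\ell(C)$ also matches the paper's own usage, for instance in the proof of Theorem~\ref{thm:eulerian}.
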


\begin{proof}
By Theorem~\ref{thm:state-vertices}, the valence of each vertex of
$\Hcal^A$ is equal to the length of the corresponding state circle.
Therefore, all vertex valences are even if and only if all state circles have
even length.
\end{proof}

\section{Eulerian Partial Duals}

\subsection{Crossing-Total Orientations}

By Lemma~\ref{lem:medial-incidences}, we identify $X$ with the half-edge set of $M(\Hcal)$ via $\iota$. Let $\Omega$ be an orientation of the edges of $M(\Hcal)$. For $x\in X$ (the medial half-edge labelled by $x$), set
\[
 \delta_\Omega(x)=
 \begin{cases}
  +1,&\text{if the medial edge is directed away from the medial vertex at }x,\\
  -1,&\text{if the medial edge is directed towards the medial vertex at }x.
 \end{cases}
\]
Then $\delta_\Omega(\tau_1x)=-\delta_\Omega(x)$. A black angle is a source or sink precisely when $\delta_\Omega(\tau_2x)=\delta_\Omega(x)$, and a white angle is a source or sink precisely when $\delta_\Omega(\tau_0x)=\delta_\Omega(x)$.

\begin{definition}\label{def:crossing-total}
Let $e$ be a hyperedge, regarded as a medial vertex.
\begin{enumerate}[label=\textup{(\alph*)}]
\item If $\delta_\Omega$ is constant on $e$, then $e$ is of \emph{$t$-type}.
\item If $e$ is not of $t$-type and $\delta_\Omega(\tau_2x)=\delta_\Omega(x)$ for every $x\in e$, then $e$ is of \emph{$c$-type}.
\item If $e$ is not of $t$-type and $\delta_\Omega(\tau_0x)=\delta_\Omega(x)$ for every $x\in e$, then $e$ is of \emph{$d$-type}.
\end{enumerate}
If every hyperedge is of one of these three types, then $\Omega$ is called a \emph{crossing-total orientation}. Let $D(\Omega)$ and $T(\Omega)$ denote the sets of $d$-type and $t$-type hyperedges, respectively.
\end{definition}

The three types are pairwise disjoint. Indeed, if the two same-sign conditions in (b) and (c) both hold, the connectedness of the $02$-cycle of $e$ implies that $\delta_\Omega$ is constant on $e$.

\begin{theorem}\label{thm:eulerian}
Let $\Hcal$ be a hypermap and $A\subseteq\Ecal(\Hcal)$. Then $\Hcal^A$ is Eulerian if and only if there is a crossing-total orientation $\Omega$ of $M(\Hcal)$ and a set $T'\subseteq T(\Omega)$ such that
\[
 A=D(\Omega)\cup T'.
\]
\end{theorem}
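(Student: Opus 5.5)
Our plan is to reduce everything to Corollary~\ref{cor:even-state}: $\Hcal^A$ is Eulerian if and only if every state circle of $S_A$ has even length $\ell(C)$. Both directions then come down to comparing an orientation $\Omega$ with the alternating cycles of $S_A$. These cycles alternate between $\tau_1$-edges and $\eta_A$-edges. The key observation is local. Since $\delta_\Omega(\tau_1x)=-\delta_\Omega(x)$, the sign flips along every $\tau_1$-edge. Along an $\eta_A$-edge the sign stays the same precisely when the smoothing chosen at that medial vertex passes through a source/sink angle. By the definition of $\eta_A$, that angle is a white angle if $e\in A$ and a black angle if $e\notin A$.

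For sufficiency, suppose $A=D(\Omega)\cup T'$ with $\Omega$ crossing-total and $T'\subseteq T(\Omega)$. We claim that $\delta_\Omega(\eta_A x)=\delta_\Omega(x)$ for every $x\in X$, and check it by cases on the hyperedge $e\ni x$:
\begin{itemize}
\item if $e\in T(\Omega)$, then $\delta_\Omega$ is constant on $e$, so the claim holds whichever of $\tau_0,\tau_2$ is used;
\item if $e\in D(\Omega)$, then $e\in A$, so $\eta_A=\tau_0$ on $e$, and the $d$-type condition gives the claim;
\item if $e$ is of $c$-type, then $e\notin A$, so $\eta_A=\tau_2$ on $e$, and the $c$-type condition gives the claim.
\end{itemize}
Hence $\delta_\Omega$ is preserved along $\eta_A$-edges and negated along $\tau_1$-edges. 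Going once around a state circle $C$, the sign changes exactly $\ell(C)$ times and must return to its starting value. So $\ell(C)$ is even, and Corollary~\ref{cor:even-state} finishes this direction.

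For necessity, assume every state circle has even length. On each state circle $C$, choose $\delta$ so that it is constant across $\eta_A$-edges and alternates across $\tau_1$-edges. This is well defined exactly because $\ell(C)$ is even, since the alternation closes up consistently around an even number of sign changes. Because $\tau_1$-edges lie inside state circles, $\delta(\tau_1x)=-\delta(x)$ holds globally. Orient each medial edge $\{x,\tau_1x\}$ away from the end where $\delta=+1$; this gives an orientation $\Omega$ with $\delta_\Omega=\delta$.

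It remains to show that $\Omega$ is crossing-total and to recover $A$. Take $e\in A$. Then $\delta_\Omega(\tau_0x)=\delta_\Omega(x)$ on $e$, so $e$ is of $d$-type or of $t$-type. Take $e\notin A$. Then the $\tau_2$ condition holds, so $e$ is of $c$-type or of $t$-type. Hence every hyperedge has one of the three types, and $D(\Omega)\subseteq A$. Moreover, no $c$-type hyperedge lies in $A$: its $c$-type condition together with the $\tau_0$ condition would force $\delta_\Omega$ to be constant on $e$ (by the disjointness remark), making it $t$-type. Similarly, no $d$-type hyperedge lies outside $A$. Setting $T'=A\cap T(\Omega)$ therefore gives $A=D(\Omega)\cup T'$.

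The main obstacle is the well-definedness step in necessity. A state circle may traverse the same medial edge's two ends, or be a $2$-cycle with $\ell(C)=1$, so one must check that defining $\delta$ circle by circle is consistent. This works because each flag lies on exactly one circle and each $\tau_1$-pair lies inside a single circle. No global orientation of the surface is used, since $\delta_\Omega$ is defined purely via the half-edge/vertex incidence.
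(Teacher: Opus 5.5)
Your proposal is correct and follows essentially the same route as the paper. Both directions reduce to Corollary~\ref{cor:even-state} via a sign function that is preserved along $\eta_A$-edges and flips along $\tau_1$-edges, with well-definedness in the necessity direction coming from $(-1)^{\ell(C)}=1$. Your choice $T'=A\cap T(\Omega)$ coincides with the paper's $T'=A\setminus D(\Omega)$, since every hyperedge in $A$ is of $d$- or $t$-type.
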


\begin{proof}
Suppose first that $\Hcal^A$ is Eulerian. By Corollary~\ref{cor:even-state}, every state circle $C$ of $S_A$ has even length. Choose a flag $x_0$ on each state circle and assign it the sign $+1$. Propagate signs according to
\[
  \delta(\eta_Ax)=\delta(x),\qquad
  \delta(\tau_1x)=-\delta(x).
  \tag{E}\label{eq:eulerian-sign}
\]
When one traverses $C$ once, exactly $\ell(C)$ $\tau_1$-edges are crossed, so the cycle relation gives the multiplier $(-1)^{\ell(C)}=1$; hence the assignment is well-defined.

Each $\tau_1$-transposition $\{x,\tau_1x\}$ corresponds to a medial edge. Equation~\eqref{eq:eulerian-sign} gives $\delta(\tau_1x)=-\delta(x)$. Thus the medial edge has a unique orientation for which it is directed away from the corresponding medial vertex at the endpoint labelled $x$ if and only if $\delta(x)=+1$. This yields an orientation $\Omega$ of $M(\Hcal)$.

If $e\in A$, then $\eta_A|_e=\tau_0|_e$, so for every $x\in e$,
\[
  \delta(\tau_0x)=\delta(x).
\]
If $\delta$ is constant on $e$, then $e$ is of $t$-type; otherwise it is of $d$-type. If $e\notin A$, then $\eta_A|_e=\tau_2|_e$, so for every $x\in e$,
\[
  \delta(\tau_2x)=\delta(x).
\]
If $\delta$ is constant on $e$, then $e$ is of $t$-type; otherwise it is of $c$-type. Thus $\Omega$ is crossing-total. Since the three types are pairwise disjoint,
\[
  D(\Omega)\subseteq A,\qquad
  A\setminus D(\Omega)\subseteq T(\Omega).
\]
Set
\[
  T':=A\setminus D(\Omega)
\]
Then $T'\subseteq T(\Omega)$ and $A=D(\Omega)\cup T'$.

Conversely, suppose that $\Omega$ is crossing-total and
\[
  A=D(\Omega)\cup T',\qquad T'\subseteq T(\Omega).
\]
Let $\delta_\Omega$ be the endpoint-sign function of this orientation. For $x\in X$, let $e(x)$ be the hyperedge containing $x$. If $e(x)\in A$, then $e(x)$ belongs either to $D(\Omega)$ or to $T'$. In the first case $e(x)$ is of $d$-type and $\eta_A=\tau_0$, while in the second it is of $t$-type. Thus in both cases
\[
  \delta_\Omega(\eta_Ax)=\delta_\Omega(x).
\]
If $e(x)\notin A$, then it does not belong to $D(\Omega)$. Since $\Omega$ is crossing-total and the three types are pairwise disjoint, $e(x)$ is of $c$-type or $t$-type; now $\eta_A=\tau_2$, so again
\[
  \delta_\Omega(\eta_Ax)=\delta_\Omega(x).
\]
On the other hand, every directed medial edge satisfies
\[
  \delta_\Omega(\tau_1x)=-\delta_\Omega(x).
\]

Now let $C$ be any state circle of $S_A$. Starting from a flag of $C$ and walking around it, the sign is unchanged across an $\eta_A$-edge and changes across a $\tau_1$-edge. Returning to the initial flag after one circuit gives
\[
  \delta_\Omega(x)=(-1)^{\ell(C)}\delta_\Omega(x).
\]
Since $\delta_\Omega(x)\in\{+1,-1\}$, $\ell(C)$ must be even. Hence every state circle of $S_A$ has even length, and Corollary~\ref{cor:even-state} implies that $\Hcal^A$ is Eulerian.
\end{proof}

\section{Modified Medial Maps and All-Crossing Orientations}

The preceding proof does not use a global orientation. Bipartiteness is different: the usual left--right labels of a directed medial edge cannot be propagated consistently along a path that passes through twisted medial bands. We replace these labels by local orientations on vertex discs and the band-gluing type relative to that choice.

\subsection{Relative Twist Data}

\begin{definition}\label{def:local-orientation}
A \emph{local orientation system} for $M(\Hcal)$ is a map $\omega:X\to\{+1,-1\}$ satisfying
\[
 \omega(\tau_0x)=-\omega(x),\qquad \omega(\tau_2x)=-\omega(x)
 \qquad (x\in X).
\]
\end{definition}

Such a system always exists: choose a sign on each $02$-orbit and propagate it around the alternating cycle. On a fixed hyperedge there are two choices, differing only by a common sign.

\begin{lemma}\label{lem:local-orientation-flags}
For any local orientation system $\omega$, define $q_\omega:Y\to\{+1,-1\}$ by
\[
  q_\omega(x,\mathsf b)=\omega(x),\qquad
  q_\omega(x,\mathsf w)=-\omega(x).
\]
Then
\[
  q_\omega(\mu_1y)=-q_\omega(y),\qquad
  q_\omega(\mu_2y)=-q_\omega(y)\qquad(y\in Y). \tag{L}\label{eq:local-orientation-flags}
\]
Conversely, every map $q:Y\to\{+1,-1\}$ satisfying~\eqref{eq:local-orientation-flags} arises uniquely from a local orientation system.

In the flag triangulation of $M(\Hcal)$, $q_\omega$ is precisely the flag-sign representation of the local orientation of each medial vertex disc: on every $\langle\mu_1,\mu_2\rangle$-orbit, the restriction of $q_\omega$ and its negative correspond to the two local orientations of that vertex disc.
\end{lemma}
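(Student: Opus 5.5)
The plan is to verify the two relations in~\eqref{eq:local-orientation-flags} by direct computation from~\eqref{eq:tau-medial}, and then to establish the converse by reading off $\omega$ from the $\mathsf b$-layer. The geometric statement will follow from the fact that the $\langle\mu_1,\mu_2\rangle$-orbits are exactly the vertex discs described in Lemma~\ref{lem:medial-incidences}.

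\emph{The relation for $\mu_2$.} Since $\mu_2(x,\mathsf b)=(x,\mathsf w)$, we have $q_\omega(x,\mathsf w)=-\omega(x)=-q_\omega(x,\mathsf b)$ by definition, and symmetrically in the other direction.

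\emph{The relation for $\mu_1$.} Split into the two layers. On the $\mathsf b$-layer,
\[
q_\omega(\mu_1(x,\mathsf b))=\omega(\tau_2x)=-\omega(x)=-q_\omega(x,\mathsf b),
\]
using Definition~\ref{def:local-orientation}. On the $\mathsf w$-layer,
\[
q_\omega(\mu_1(x,\mathsf w))=-\omega(\tau_0x)=\omega(x)=-q_\omega(x,\mathsf w).
\]

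\emph{The converse.} Given $q$ satisfying~\eqref{eq:local-orientation-flags}, define $\omega(x):=q(x,\mathsf b)$. The $\mu_2$-relation forces $q(x,\mathsf w)=-\omega(x)$, so $q=q_\omega$. For the $\omega$-axioms, $\omega(\tau_2x)=q(\mu_1(x,\mathsf b))=-\omega(x)$. For $\tau_0$, use $\mu_2\mu_1\mu_2(x,\mathsf b)=(\tau_0x,\mathsf b)$, as computed in the proof of Lemma~\ref{lem:medial-incidences}. This composite applies three sign flips, so $\omega(\tau_0x)=-\omega(x)$. Uniqueness is immediate, since $\omega$ is recovered as the restriction of $q$ to $Y_{\mathsf b}$.

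\emph{The geometric interpretation.} By~\eqref{eq:medial-vertex-orbit}, the $\langle\mu_1,\mu_2\rangle$-orbit of $(x,\mathsf b)$ is $e\times\{\mathsf b,\mathsf w\}$, which is the set of flags of the medial vertex disc $v_e$. Within such a disc, consecutive flags share a side of colour $1$ or $2$. A local orientation of the disc is the same thing as a $\pm$-colouring of its flag triangles that alternates across every interior side, i.e.\ across $\mu_1$ and $\mu_2$. Because the orbit is a single alternating cycle of even length, there are exactly two such colourings, $q_\omega|_{e\times\{\mathsf b,\mathsf w\}}$ and its negative. These correspond to the two orientations of the disc, in agreement with the remark that $\omega$ on a hyperedge is determined up to a common sign.

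The only step needing real care is this last identification, namely making precise that ``orientation of a disc'' means ``alternating sign on flag triangles across its $\mu_1,\mu_2$ sides''. This is the standard flag-sign description of an orientation of a surface. I would state it as such, and note that the bands, which are $\mu_0$-sides, are deliberately not constrained, which is why $q_\omega$ gives only local orientations.
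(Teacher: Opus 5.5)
Your proposal is correct and follows essentially the same route as the paper. The shared steps are: the layer-by-layer computation for~\eqref{eq:local-orientation-flags}; recovering $\omega$ as the restriction of $q$ to $Y_{\mathsf b}$ for the converse, where your composite $\mu_2\mu_1\mu_2(x,\mathsf b)=(\tau_0x,\mathsf b)$ is the paper's use of the $\mu_1$-relation at $(x,\mathsf w)$; and the standard flag-sign description of disc orientations. Your explicit count of exactly two alternating colourings on the even $\langle\mu_1,\mu_2\rangle$-cycle just makes more concrete the paper's remark that alternating signs determine a unique orientation of the disc, reversed by a global sign change.
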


\begin{proof}
The second equality follows immediately from $\mu_2(x,\mathsf c)=(x,\bar{\mathsf c})$, where $\{\mathsf c,\bar{\mathsf c}\}=\{\mathsf b,{\mathsf w}\}$. If $y=(x,\mathsf b)$, then
\[
 q_\omega(\mu_1y)
 =q_\omega(\tau_2x,\mathsf b)
 =\omega(\tau_2x)=-\omega(x)=-q_\omega(y).
\]
If $y=(x,\mathsf w)$, then
\[
 q_\omega(\mu_1y)
 =q_\omega(\tau_0x,\mathsf w)
 =-\omega(\tau_0x)=\omega(x)=-q_\omega(y).
\]
Thus~\eqref{eq:local-orientation-flags} holds.

Conversely, let $\omega(x)=q(x,\mathsf b)$. The $\mu_2$ relation gives $q(x,\mathsf w)=-\omega(x)$; applying the $\mu_1$ relation at $(x,\mathsf b)$ and $(x,\mathsf w)$, respectively, gives
\[
  \omega(\tau_2x)=-\omega(x),\qquad
  \omega(\tau_0x)=-\omega(x).
\]
Thus $\omega$ is a local orientation system, and uniqueness is clear.

Finally, the flags of a medial vertex disc form exactly its $\langle\mu_1,\mu_2\rangle$-orbit, whose flag triangulation is a disc. Adjacent small triangles meet across colour-$1$ or colour-$2$ edges; with the colour order fixed, an orientation of the disc is exactly one for which adjacent triangles have opposite flag signs. Conversely, the alternating signs in~\eqref{eq:local-orientation-flags} make the orientations of these triangles compatible; since the complex is a disc, they determine a unique local orientation. Changing all signs reverses the orientation of the disc.
\end{proof}

For a medial edge $a=\{x,\tau_1x\}$, define its \emph{twist indicator} relative to $\omega$ by $\varepsilon_\omega(a)\in\{0,1\}$,
\[
 \omega(\tau_1x)=(-1)^{1+\varepsilon_\omega(a)}\omega(x).
 \tag{T}\label{eq:twist}
\]
This definition is independent of the choice of endpoint $x$: exchanging $x$ and $\tau_1x$ replaces the ratio of the two values in $\{+1,-1\}$ by its reciprocal, which is the same ratio.

\begin{lemma}\label{lem:twist-flag}
Let $a=\{x,\tau_1x\}$ be a medial edge. Then for $c\in\{\mathsf b,\mathsf w\}$,
\[
 q_\omega\bigl(\mu_0(x,c)\bigr)
 =(-1)^{1+\varepsilon_\omega(a)}q_\omega(x,c). \tag{U}
 \label{eq:twist-from-flags}
\]
Thus, relative to the local orientations encoded by $q_\omega$, $\varepsilon_\omega(a)=0$ if and only if the band $a$ is untwisted, while $\varepsilon_\omega(a)=1$ if and only if the band is twisted.

If $\omega'$ is another local orientation system and
\[
 \omega'(x)=r(e(x))\omega(x),\qquad
 r(e)=(-1)^{\rho(e)},
\]
where $e(x)$ is the hyperedge containing $x$, then
\[
 \varepsilon_{\omega'}(a)
 \equiv\varepsilon_\omega(a)+\rho(e(x))
 +\rho(e(\tau_1x))\pmod 2. \tag{S}\label{eq:twist-switch}
\]
\end{lemma}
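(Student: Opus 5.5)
The lemma splits into three parts, and I would prove them in turn: the flag identity (U), its geometric reading, and the switching formula (S).

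\textbf{The identity (U).} This is a direct computation from the definitions of $\mu_0$ and $q_\omega$. Since $\mu_0(x,c)=(\tau_1x,c)$ preserves the layer, it suffices to treat the two layers separately.

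For $c=\mathsf b$ we have
\[
q_\omega(\mu_0(x,\mathsf b))=\omega(\tau_1x)=(-1)^{1+\varepsilon_\omega(a)}\omega(x)=(-1)^{1+\varepsilon_\omega(a)}q_\omega(x,\mathsf b),
\]
where the middle equality is (T).

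For $c=\mathsf w$ both sides acquire the same factor $-1$, since $q_\omega(\cdot,\mathsf w)=-\omega(\cdot)$. So the identity again reduces to (T).

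The formula is symmetric in the choice of endpoint of $a$. This matches the remark after (T) that $\varepsilon_\omega(a)$ does not depend on which endpoint is used.

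\textbf{Geometric interpretation.} By Lemma~\ref{lem:local-orientation-flags}, $q_\omega$ restricted to each $\langle\mu_1,\mu_2\rangle$-orbit is the flag-sign encoding of a chosen orientation of that medial vertex disc. The band $a$ is the $\langle\mu_0,\mu_2\rangle$-orbit of $(x,\mathsf b)$. Its flag triangulation is a square subdivided by the colour-$0$ edges (crossed by $\mu_0$) and the colour-$2$ edges (crossed by $\mu_2$).

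The two disc orientations at the ends of $a$ extend to a common orientation of the disc $\cup$ band $\cup$ disc exactly when flags adjacent across the $\mu_0$-edge have opposite signs. This is the same alternating-sign criterion for orientations used in the proof of Lemma~\ref{lem:local-orientation-flags}.

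By (U), the flags have opposite signs precisely when $\varepsilon_\omega(a)=0$. So the band is untwisted relative to the chosen local orientations iff $\varepsilon_\omega(a)=0$, and twisted iff $\varepsilon_\omega(a)=1$.

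I expect this step to be the main obstacle. It is not a matter of computation but of fixing precisely what ``twisted relative to local orientations'' means. I would first adopt the definition that the orientations of the two end discs extend over the band. I would then argue, as in Lemma~\ref{lem:local-orientation-flags}, that on the triangulated disc-plus-band, orientability is equivalent to flag signs alternating across every internal triangle edge. The $\mu_2$-relation and the $\mu_1$-relation already hold by (L), so only the $\mu_0$-edge remains to be checked.

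\textbf{The switching formula (S).} This is a short computation. Write $y=\tau_1x$. Applying (T) for $\omega'$ gives
\[
\omega'(y)=r(e(y))\,\omega(y)=r(e(y))(-1)^{1+\varepsilon_\omega(a)}\omega(x)=r(e(y))\,r(e(x))(-1)^{1+\varepsilon_\omega(a)}\omega'(x),
\]
using $r(e(x))^2=1$. Comparing this with $\omega'(y)=(-1)^{1+\varepsilon_{\omega'}(a)}\omega'(x)$ yields (S) modulo $2$.

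The formula also applies to loops, where $e(x)=e(\tau_1x)$. In that case the two $\rho$ terms cancel modulo $2$, and $\varepsilon$ is unchanged, as it should be.
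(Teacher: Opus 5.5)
Your proposal is correct and follows essentially the same route as the paper: the same layer-by-layer computation for (U), and the same criterion that the end-disc orientations extend across the band exactly when the flag signs across the colour-$0$ edge are opposite. You also make the same remark about loops. The only cosmetic difference is that you derive (S) directly from (T) at the level of $\omega$, whereas the paper passes through $q_{\omega'}=r(e(x))\,q_\omega$ and (U); the two computations are equivalent.
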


\begin{proof}
For $c=\mathsf b$,
\[
 q_\omega(\mu_0(x,\mathsf b))
 =q_\omega(\tau_1x,\mathsf b)
 =\omega(\tau_1x)
 =(-1)^{1+\varepsilon_\omega(a)}q_\omega(x,\mathsf b).
\]
For $c=\mathsf w$, both sides acquire an additional minus sign, so the same equality remains valid.

Represent the band by a rectangle. The flags $(x,c)$ and $\mu_0(x,c)=(\tau_1x,c)$ lie at the two ends on the same side of the rectangle. With the colour order fixed, the local orientations of the endpoint discs extend compatibly across the rectangle if and only if the two signs across this colour-$0$ flag edge are opposite; this is exactly
\[
  q_\omega(\mu_0(x,c))=-q_\omega(x,c).
\]
If the two signs agree, transporting a local orientation along the band produces the opposite orientation, so the band is twisted relative to the chosen local orientations (with twisting taken modulo $2$). Together with~\eqref{eq:twist-from-flags}, this proves the criterion. The criterion is independent of the choice of $c$.

For $y=(x,c)$ we also have
\[
 q_{\omega'}(y)=r(e(x))q_\omega(y).
\]
Thus
\[
\begin{aligned}
 q_{\omega'}(\mu_0y)
 &=r(e(\tau_1x))q_\omega(\mu_0y)\\
 &=(-1)^{1+\varepsilon_\omega(a)}
   r(e(\tau_1x))r(e(x))q_{\omega'}(y).
\end{aligned}
\]
Substituting $r=(-1)^\rho$ and comparing with~\eqref{eq:twist-from-flags} gives~\eqref{eq:twist-switch}.

If $a$ is a loop, then $e(x)=e(\tau_1x)$ and the additional term in the formula is $2\rho(e(x))\equiv0\pmod2$. Thus reversing the same vertex disc reverses both endpoint local orientations but does not change the twist of a loop band, in agreement with its topological definition.
\end{proof}

Lemma~\ref{lem:twist-flag} shows that the twist indicator is not an intrinsic $0$--$1$ label: when the local orientation of a medial vertex disc is changed, the band indicators transform according to~\eqref{eq:twist-switch}. This is exactly the switching operation in a signed rotation representation. Thus the underlying medial map is determined by the $\tau$-model, while $(\omega,\varepsilon_\omega)$ is only one local signed description of it.

\begin{definition}\label{def:modified-medial}
For a local orientation system $\omega$, the \emph{modified medial map} $\widehat M_\omega(\Hcal)$ is obtained from $M(\Hcal)$ by inserting an artificial bivalent bar into each medial edge $a$ satisfying $\varepsilon_\omega(a)=1$. The checkerboard colouring of $M(\Hcal)$ extends across every subdivision.
\end{definition}

Figure~\ref{fig:modified-medial} illustrates this local operation: when a medial band is twisted relative to the chosen local orientations, an artificial bivalent bar is inserted in its interior, splitting the original edge into two segments. This artificial bar represents neither a hyperedge of the original hypermap nor an original medial vertex; it records only the twisting obstruction of the band. In the orientation definitions below, this bivalent bar is required to be a source or a sink.

\begin{figure}[htbp]
  \centering
  \includegraphics[width=10cm]{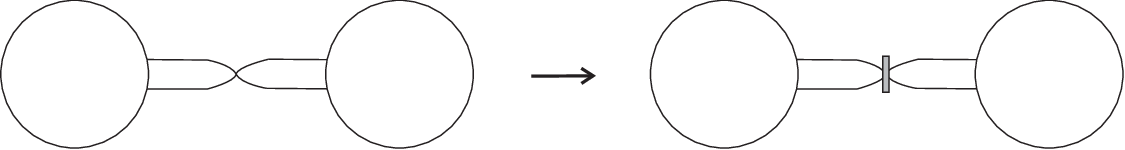}
  \caption{Local construction of the modified medial map.}
  \label{fig:modified-medial}
\end{figure}

Thus the set of subdivided medial edges generally depends on $\omega$. The following lemma shows, however, that the existence of an all-crossing orientation relevant to bipartite partial duals, and the associated set $C(\Phi)$, do not depend on this local choice.

The following lemma provides a convenient algebraic encoding of such orientations.

\begin{lemma}\label{lem:signed-directions}
Orientations of $\widehat M_\omega(\Hcal)$ for which every artificial bivalent bar is a source or a sink are in bijection with maps $\delta:X\to\{+1,-1\}$ satisfying
\[
 \delta(\tau_1x)=(-1)^{1+\varepsilon_\omega(\{x,\tau_1x\})}\delta(x)
 \qquad(x\in X).\tag{D}\label{eq:signed-direction}
\]
Here $\delta(x)=+1$ means that the edge segment at $x$ is directed away from its incident original medial vertex.
\end{lemma}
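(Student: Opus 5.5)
The plan is to build the bijection explicitly and to check separately an untwisted medial edge and a subdivided one. First, the edge set of $\widehat M_\omega(\Hcal)$. A medial edge $a=\{x,\tau_1x\}$ with $\varepsilon_\omega(a)=0$ survives as a single edge. An edge with $\varepsilon_\omega(a)=1$ is replaced by two segments: one joins the original medial vertex at $x$ to the bar, the other joins the bar to the original medial vertex at $\tau_1x$. By Lemma~\ref{lem:medial-incidences}, every $x\in X$ labels exactly one half-edge at an original medial vertex. So, in both cases, the half-edge $x$ lies on exactly one edge or segment of $\widehat M_\omega(\Hcal)$, and $\delta(x)$ as described in the statement is well defined for any orientation. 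The map from orientations to sign functions is therefore
\[
\Phi\longmapsto\delta_\Phi,\qquad \delta_\Phi(x)=+1 \iff \text{the segment at $x$ points away from its original medial vertex.}
\]

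Next I check that $\delta_\Phi$ satisfies~\eqref{eq:signed-direction}, edge by edge. If $\varepsilon_\omega(a)=0$, the segment at $x$ and the segment at $\tau_1x$ are the same edge. An edge directed away from one end is directed towards the other, so $\delta(\tau_1x)=-\delta(x)=(-1)^{1+0}\delta(x)$. This includes loops, since the two half-edges $x$ and $\tau_1x$ of a loop are still distinct ends. If $\varepsilon_\omega(a)=1$, the bar is bivalent and its two segments carry $x$ and $\tau_1x$. The bar is a source exactly when both segments point away from the bar, that is, towards their original medial vertices, so $\delta(x)=\delta(\tau_1x)=-1$. It is a sink exactly when $\delta(x)=\delta(\tau_1x)=+1$. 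Thus "the bar is a source or a sink" is equivalent to $\delta(\tau_1x)=\delta(x)=(-1)^{1+1}\delta(x)$.

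For the inverse, take $\delta$ satisfying~\eqref{eq:signed-direction}. On each segment, orient it away from its original medial vertex at $x$ when $\delta(x)=+1$, and towards it otherwise. For an unsubdivided edge, the two ends would each prescribe an orientation; these agree precisely because $\delta(\tau_1x)=-\delta(x)$. For a subdivided edge, each segment has only one original end, so its orientation is prescribed unambiguously. The relation $\delta(\tau_1x)=\delta(x)$ then forces both segments to point into the bar or both out of it, so the bar is a sink or a source. The two constructions are visibly mutually inverse, since each just reads off or imposes the direction at every half-edge.

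The only delicate point is bookkeeping. One must confirm that the segments of $\widehat M_\omega(\Hcal)$ correspond exactly to the half-edges $x$ of subdivided edges, and the unsubdivided edges to pairs $\{x,\tau_1x\}$. This ensures no segment is left without a label, and that loops (including twisted loops at a single medial vertex) are handled by the same argument. I expect this to be a careful restatement rather than a real obstacle: the substance is the two-case sign check above.
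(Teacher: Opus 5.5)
Your proposal is correct and follows essentially the same route as the paper: an unsubdivided edge ($\varepsilon_\omega(a)=0$) forces opposite endpoint signs, a subdivided edge whose bar is a source or sink forces equal signs, and the construction reverses. You simply make the inverse map and the handling of loops explicit, which the paper's proof leaves implicit.
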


\begin{proof}
If $\varepsilon_\omega(a)=0$, the edge $a$ is not subdivided and its endpoint signs are opposite, as for an ordinary directed edge. If $\varepsilon_\omega(a)=1$, the two segments meet at an artificial source or sink, and their signs at the two original medial vertices agree. These two local cases give exactly~\eqref{eq:signed-direction}, and the construction is reversible.
\end{proof}

\begin{definition}\label{def:all-crossing}
Let $\Phi$ be an orientation encoded by $\delta$ as in Lemma~\ref{lem:signed-directions}. If, for every $x\in e$,
\[
 \delta(\tau_2x)=\delta(x),\qquad
 \delta(\tau_0x)=-\delta(x),
\]
then $e$ is called a \emph{$c$-type all-crossing vertex}; if, for every $x\in e$,
\[
 \delta(\tau_0x)=\delta(x),\qquad
 \delta(\tau_2x)=-\delta(x),
\]
then it is called a \emph{$d$-type all-crossing vertex}. If every original medial vertex is of exactly one of these two types, then $\Phi$ is called \emph{all-crossing}. Let $C(\Phi)$ denote the set of its $c$-type hyperedges.
\end{definition}

Thus, at a $c$-type vertex all black angles are sources or sinks and all white angles are crossings; at a $d$-type vertex the roles of black and white are interchanged.

To connect these definitions with the running example, Figure~\ref{fig:all-crossing-example} shows an all-crossing orientation on the medial map in Figure~\ref{fig:example-medial}. The four small discs in the figure lie inside the twisted medial edges $m_{4,8}$, $m_{7,11}$, $m_{6,16}$, and $m_{9,15}$, and therefore represent the four artificial bivalent bars of $\widehat M_\omega(\Hcal_0)$. The arrows give the directions of the edge segments and form a source or sink at each artificial bar. In the half-edge orders at $v_h,v_a,v_b$, the endpoint signs are
\[
\delta=(+,+,-,-,+,+,-,-;\,+,-,-,+;\,-,-,+,+).
\]
Thus $h$ and $b$ are $c$-type hyperedges and $a$ is $d$-type, so the set of $c$-type hyperedges is $C(\Phi)=\{h,b\}$. This gives a concrete example of the set $A=C(\Phi)$ in Theorem~\ref{thm:bipartite}.

\begin{figure}[htbp]
  \centering
  \setlength{\unitlength}{0.001\textwidth}
    \begin{picture}(720,541)
    \put(0,0){\includegraphics[width=0.72\textwidth]{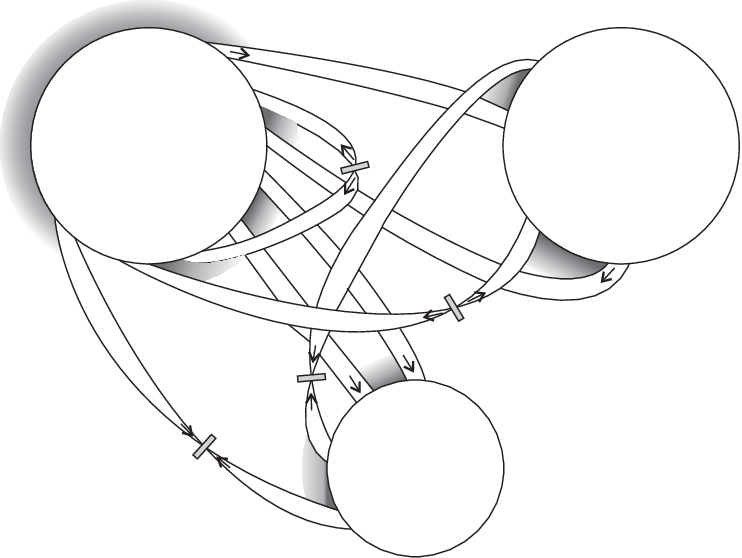}}
    \put(140,408){\makebox(0,0)[c]{\footnotesize$v_h$}}
    \put(600,408){\makebox(0,0)[c]{\footnotesize$v_a$}}
    \put(396,90){\makebox(0,0)[c]{\footnotesize$v_b$}}
    % Counterclockwise half-edge labels at v_h.
    \put(220,330){\makebox(0,0)[c]{\scriptsize$\bar{1}$}}
    \put(240,370){\makebox(0,0)[c]{\scriptsize$\bar{2}$}}
    \put(245,410){\makebox(0,0)[c]{\scriptsize$\bar{3}$}}
    \put(235,445){\makebox(0,0)[c]{\scriptsize$\bar{4}$}}
    \put(200,485){\makebox(0,0)[c]{\scriptsize$\bar{5}$}}
    \put(70,330){\makebox(0,0)[c]{\scriptsize$\bar{6}$}}
    \put(120,305){\makebox(0,0)[c]{\scriptsize$\bar{7}$}}
    \put(185,310){\makebox(0,0)[c]{\scriptsize$\bar{8}$}}
    % Counterclockwise half-edge labels at v_a and v_b.
    \put(530,475){\makebox(0,0)[c]{\scriptsize$\bar{9}$}}
    \put(505,424){\makebox(0,0)[c]{\scriptsize$\bar{10}$}}
    \put(520,340){\makebox(0,0)[c]{\scriptsize$\bar{11}$}}
    \put(600,300){\makebox(0,0)[c]{\scriptsize$\bar{12}$}}
    \put(405,163){\makebox(0,0)[c]{\scriptsize$\bar{13}$}}
    \put(365,150){\makebox(0,0)[c]{\scriptsize$\bar{14}$}}
    \put(330,105){\makebox(0,0)[c]{\scriptsize$\bar{15}$}}
    \put(345,45){\makebox(0,0)[c]{\scriptsize$\bar{16}$}}
  \end{picture}
  \caption{An all-crossing orientation of $\Hcal_0$. }
  \label{fig:all-crossing-example}
\end{figure}

\begin{remark}\label{rem:high-valence}
If a hyperedge $e$ has valence $k$, then the corresponding medial vertex $v_e$ has degree $2k$. Definition~\ref{def:crossing-total} uses the three types $c,d,t$, whereas Definition~\ref{def:all-crossing} uses only $c,d$. When $k>2$, our $c$- and $d$-types are definitions for higher-valence medial vertices: $c$-type requires every black angle to be a source or sink and every white angle to be a crossing, while $d$-type interchanges the roles of black and white. We do not apply the four-valent ``in,in,out,out'' pattern to higher-valence vertices without qualification. Only when $k=2$ is $v_e$ four-valent, and our $c,d,t$ types are exactly the classical local patterns in a medial graph.
\end{remark}

\begin{lemma}\label{lem:choice-independent}
The existence of an all-crossing orientation with a prescribed set $C(\Phi)$ is independent of the choice of local orientation system $\omega$.
\end{lemma}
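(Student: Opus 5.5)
The plan is to compare two local orientation systems directly and transport the sign data from one to the other. Let $\omega$ and $\omega'$ be local orientation systems. On each $02$-orbit a local orientation system is determined up to a common sign, so there is a function $r=(-1)^{\rho}:\Ecal(\Hcal)\to\{+1,-1\}$ with $\omega'(x)=r(e(x))\omega(x)$ for all $x\in X$, exactly as in Lemma~\ref{lem:twist-flag}. By Lemma~\ref{lem:signed-directions}, an orientation $\Phi$ of $\widehat M_\omega(\Hcal)$ in which every artificial bar is a source or a sink is the same thing as a map $\delta:X\to\{+1,-1\}$ satisfying~\eqref{eq:signed-direction} for $\omega$. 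Define
\[
  \delta'(x)=r(e(x))\,\delta(x)\qquad(x\in X).
\]
We will show that $\delta'$ satisfies~\eqref{eq:signed-direction} for $\omega'$ and that it has exactly the same $c$-type and $d$-type vertices as $\delta$.

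For the first claim, take a medial edge $a=\{x,\tau_1x\}$. Then
\[
  \delta'(\tau_1x)=r(e(\tau_1x))(-1)^{1+\varepsilon_\omega(a)}\delta(x)
  =(-1)^{1+\varepsilon_\omega(a)+\rho(e(x))+\rho(e(\tau_1x))}\delta'(x),
\]
and by~\eqref{eq:twist-switch} the exponent is $1+\varepsilon_{\omega'}(a)$ modulo $2$. Hence $\delta'$ encodes an admissible orientation $\Phi'$ of $\widehat M_{\omega'}(\Hcal)$. For a loop, the two $\rho$ terms cancel, which is consistent with the loop case of Lemma~\ref{lem:twist-flag}.

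For the second claim, note that $\tau_0$ and $\tau_2$ preserve the hyperedge $e(x)$, so multiplying $\delta$ by the constant $r(e)$ on $e$ leaves every relation $\delta(\tau_ix)=\pm\delta(x)$ with $i\in\{0,2\}$ unchanged. Therefore each medial vertex has the same type under $\delta'$ as under $\delta$. It follows that $\Phi'$ is all-crossing whenever $\Phi$ is, and that $C(\Phi')=C(\Phi)$.

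The map $\delta\mapsto\delta'$ is a bijection, since $\omega\leftrightarrow\omega'$ is inverted by the same $r$. This establishes the claimed independence. The only delicate point is the twist-switching identity~\eqref{eq:twist-switch}, which is already available, and its loop case, which is already covered; beyond that, the argument is a routine sign check.
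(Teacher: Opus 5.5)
Your proposal is correct and follows the paper's proof essentially step for step. You use the same rescaling $\delta'(x)=r(e(x))\delta(x)$ and the same appeal to \eqref{eq:twist-switch} to verify \eqref{eq:signed-direction} for $\omega'$. You also make the same observation that $\tau_0$- and $\tau_2$-pairs lie in a single hyperedge, so every type condition, and hence $C(\Phi)$, is preserved under the resulting bijection.
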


\begin{proof}
Let $\omega'$ be another local orientation system. Since the ratio $\omega'/\omega$ is invariant under both $\tau_0$ and $\tau_2$, it is constant on each hyperedge. Thus there is a map $r:\Ecal(\Hcal)\to\{+1,-1\}$ such that $\omega'(x)=r(e(x))\omega(x)$, where $e(x)$ is the hyperedge containing $x$. If $\delta$ satisfies~\eqref{eq:signed-direction}, set $\delta'(x)=r(e(x))\delta(x)$. For $a=\{x,\tau_1x\}$, equations~\eqref{eq:signed-direction} and~\eqref{eq:twist-switch} give
\[
\begin{aligned}
 \delta'(\tau_1x)
 &=r(e(\tau_1x))(-1)^{1+\varepsilon_\omega(a)}\delta(x)\\
 &=(-1)^{1+\varepsilon_{\omega'}(a)}r(e(x))\delta(x)\\
 &=(-1)^{1+\varepsilon_{\omega'}(a)}\delta'(x).
\end{aligned}
\]
Thus $\delta'$ satisfies the corresponding form of~\eqref{eq:signed-direction} for $\omega'$. Applying the same transformation with $r$ recovers $\delta$, so this gives a bijection between the two classes of orientations. For a pair of $\tau_0$- or $\tau_2$-paired flags, both flags lie in the same hyperedge and are multiplied by the same factor $r(e)$. Consequently all same-sign and opposite-sign conditions in Definition~\ref{def:all-crossing} are preserved, and in particular $C(\Phi)$ is unchanged.
\end{proof}

\section{Bipartite Partial Duals}

\begin{theorem}\label{thm:bipartite}
Let $\Hcal$ be a hypermap, and let $A\subseteq\Ecal(\Hcal)$. Then the partial dual $\Hcal^A$ is bipartite if and only if, for any local orientation system $\omega$, the modified medial map $\widehat M_\omega(\Hcal)$ admits an all-crossing orientation $\Phi$ such that
\[
A=C(\Phi).
\]
\end{theorem}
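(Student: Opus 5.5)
The plan is to build a direct dictionary between $\pm1$ vertex-colourings of $\Hcal^A$ and signed directions $\delta$ as in Lemma~\ref{lem:signed-directions}. The dictionary is twisted by the local orientation system: $\delta=\chi\,\omega$. By Lemma~\ref{lem:choice-independent}, whether an all-crossing orientation with $C(\Phi)=A$ exists does not depend on $\omega$. So "for any $\omega$" and "for some $\omega$" are equivalent, and I fix one local orientation system $\omega$ throughout.

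\emph{Step 1 (reformulating bipartiteness on flags).} By Definition~\ref{def:bipartite} applied to $\Hcal^A=(X;\tau_0^A,\tau_1,\tau_2^A)$, the partial dual is bipartite iff there is $\chi:X\to\{+1,-1\}$ satisfying
\[
\chi(\tau_1x)=\chi(x),\qquad \chi(\tau_2^Ax)=\chi(x),\qquad \chi(\tau_0^Ax)=-\chi(x)\qquad(x\in X).
\]
The first two conditions say exactly that $\chi$ is constant on the vertex orbits $\langle\tau_1,\tau_2^A\rangle$ of $\Hcal^A$. Using Definition~\ref{def:partial-dual}, I split these conditions according to the hyperedge containing $x$:
\begin{itemize}[label={}]
\item for $x\in e\in A$: $\chi(\tau_0x)=\chi(x)$ and $\chi(\tau_2x)=-\chi(x)$;
\item for $x\in e\notin A$: $\chi(\tau_2x)=\chi(x)$ and $\chi(\tau_0x)=-\chi(x)$.
\end{itemize}

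\emph{Step 2 (the transform).} Set $\delta(x)=\chi(x)\omega(x)$. Since $\omega$ flips sign under both $\tau_0$ and $\tau_2$ (Definition~\ref{def:local-orientation}), multiplying by $\omega$ inverts every same-sign/opposite-sign condition along $\tau_0$- and $\tau_2$-pairs.
\begin{itemize}[label={}]
\item For $e\in A$ this gives $\delta(\tau_2x)=\delta(x)$ and $\delta(\tau_0x)=-\delta(x)$, so $e$ is a $c$-type all-crossing vertex.
\item For $e\notin A$ it gives $\delta(\tau_0x)=\delta(x)$ and $\delta(\tau_2x)=-\delta(x)$, so $e$ is $d$-type.
\end{itemize}
Along $\tau_1$, invariance of $\chi$ and the definition~\eqref{eq:twist} of $\varepsilon_\omega$ give
\[
\delta(\tau_1x)=\chi(x)\,\omega(\tau_1x)=(-1)^{1+\varepsilon_\omega(\{x,\tau_1x\})}\delta(x).
\]
This is exactly~\eqref{eq:signed-direction}. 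Lemma~\ref{lem:signed-directions} then turns $\delta$ into an orientation $\Phi$ of $\widehat M_\omega(\Hcal)$ in which every artificial bar is a source or sink. The $c$- and $d$-type conditions are mutually exclusive, since they prescribe opposite relations along $\tau_2$. Hence each medial vertex has exactly one type, $\Phi$ is all-crossing, and $C(\Phi)=A$.

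\emph{Step 3 (converse).} Given an all-crossing $\Phi$ with $C(\Phi)=A$, encoded by $\delta$, set $\chi=\delta\omega$ and run the same computations backwards. The relation~\eqref{eq:signed-direction} together with~\eqref{eq:twist} yields $\chi(\tau_1x)=\chi(x)$. The $c$-type conditions on $A$ and $d$-type conditions off $A$ yield the split conditions of Step~1. So $\chi$ is a proper 2-colouring of $\Gamma(\Hcal^A)$ and $\Hcal^A$ is bipartite.

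The only delicate point is the sign bookkeeping: checking that the single factor $\omega$ simultaneously converts the $\tau_0/\tau_2$ conditions and produces the twist exponent along $\tau_1$. It is also worth noting explicitly that constancy of $\chi$ on $\Hcal^A$-vertices is precisely invariance under $\tau_1$ and $\tau_2^A$. No global orientation is needed anywhere, since $\omega$ is purely local.
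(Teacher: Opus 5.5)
Your proposal is correct and follows the paper's proof essentially line by line. The paper likewise sets $\delta=\lambda\omega$ (with $\lambda$ the pulled-back bipartition), uses $\tau_1$-invariance together with \eqref{eq:twist} to obtain \eqref{eq:signed-direction}, reads off $c$-type on $A$ and $d$-type off $A$ from the sign flips of $\omega$ under $\tau_0$ and $\tau_2$, and reverses the computation via $\lambda=\delta\omega$ for the converse. Your explicit check that the $c$- and $d$-conditions are mutually exclusive fills a point the paper leaves implicit, and since your computation works for an arbitrary fixed $\omega$, the appeal to Lemma~\ref{lem:choice-independent} is not actually needed.
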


\begin{proof} By Lemma~\ref{lem:choice-independent}, this condition is independent of the chosen local orientation system $\omega$.

Suppose first that $\Hcal^A$ is bipartite. Let $\lambda:X\to\{+1,-1\}$ be the pullback of a bipartition of its vertex-adjacency multigraph. Since the vertices of $\Hcal^A$ are the $\langle\tau_1,\tau_2^A\rangle$-orbits,
\[
 \lambda(\tau_1x)=\lambda(\tau_2^Ax)=\lambda(x),
 \qquad
 \lambda(\tau_0^Ax)=-\lambda(x).\tag{B}\label{eq:bipartition-identities}
\]
Set $\delta(x)=\lambda(x)\omega(x)$. The first equality in~\eqref{eq:bipartition-identities}, together with~\eqref{eq:twist}, implies~\eqref{eq:signed-direction}; hence $\delta$ gives an orientation of the modified medial map.

If $e\in A$, then on $e$ we have $\tau_2^A=\tau_0$ and $\tau_0^A=\tau_2$. Using the identities in~\eqref{eq:bipartition-identities} and the fact that $\omega$ changes sign under both $\tau_0$ and $\tau_2$, we obtain
\[
 \delta(\tau_0x)=-\delta(x),\qquad
 \delta(\tau_2x)=\delta(x)\qquad(x\in e).
\]
Thus $e$ is of $c$-type. If $e\notin A$, the same calculation gives
\[
 \delta(\tau_0x)=\delta(x),\qquad
 \delta(\tau_2x)=-\delta(x)\qquad(x\in e),
\]
so $e$ is of $d$-type. The resulting orientation is therefore all-crossing, and $C(\Phi)=A$.

Conversely, let $\Phi$ be an all-crossing orientation with $C(\Phi)=A$, and let $\delta$ be its signed direction map. Define
\[
 \lambda(x)=\delta(x)\omega(x).
\]
Equations~\eqref{eq:twist} and~\eqref{eq:signed-direction} show that $\lambda(\tau_1x)=\lambda(x)$. If $e\in A$, the $c$-type conditions give
\[
 \lambda(\tau_0x)=\lambda(x),\qquad
 \lambda(\tau_2x)=-\lambda(x).
\]
Since $\tau_2^A=\tau_0$ and $\tau_0^A=\tau_2$ on $e$, this yields
\[
 \lambda(\tau_2^Ax)=\lambda(x),\qquad
 \lambda(\tau_0^Ax)=-\lambda(x).
\]
If $e\notin A$, the $d$-type conditions give the same two identities, since now $\tau_2^A=\tau_2$ and $\tau_0^A=\tau_0$. Hence $\lambda$ is constant on every vertex orbit of $\Hcal^A$ and changes sign along every adjacency edge of $\Gamma(\Hcal^A)$. It is therefore a valid bipartition of $\Hcal^A$.
\end{proof}
\begin{example}\label{ex:mixed-partials}
It is easy to check that $\Hcal_0$ has exactly one all-crossing orientation, as shown in Figure~\ref{fig:all-crossing-example}.
Let $A=\{h,b\}$ denote the set of $c$-type edges identified by the all-crossing orientation. The identity $\mathcal V(\Hcal_0^A)=X/\langle\tau_1,\tau_2^A\rangle$ directly yields the two vertex orbits
\[
 P=(1,4,5,8,9,10,14,15),\qquad
 Q=(2,3,6,7,11,12,13,16).
\]
In $\Gamma(\Hcal_0^A)$, every adjacency edge contributed by $h$, $a$, and $b$ joins $P$ to $Q$. Hence $\{P\}\sqcup\{Q\}$ is a bipartition, so $\Hcal_0^A$ is the bipartite partial dual of $\Hcal_0$.
\end{example}

\begin{corollary}\label{cor:even-obstruction}
If $\Hcal^A$ is bipartite for some $A\subseteq\Ecal(\Hcal)$, then every hyperedge of $\Hcal$ has even valence.
\end{corollary}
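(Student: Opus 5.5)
Fix a hyperedge $e$ and show directly that $\val(e)=|e|/2$ is even, using the bipartition $\lambda$ from the proof of Theorem~\ref{thm:bipartite}. The route goes through Theorem~\ref{thm:bipartite}, since that gives, for any local orientation system $\omega$, an all-crossing orientation with $A=C(\Phi)$. One could pass through the signed direction map $\delta=\lambda\omega$. It is simpler, however, to work with $\lambda$ alone. On $e$, the identities in~\eqref{eq:bipartition-identities} say that $\lambda$ is invariant under $\tau_2^A$ and changes sign under $\tau_0^A$. On $e$ these two involutions are $\tau_0,\tau_2$ in some order, depending on whether $e\in A$.

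The key step is a parity count along the alternating $02$-cycle of $e$. This cycle has length $|e|=2k$, with $k=\val(e)$. Its edges alternate between $\tau_0$-transpositions and $\tau_2$-transpositions, so there are exactly $k$ of each colour. Suppose first $e\notin A$. Walking once around the cycle, $\lambda$ flips across each $\tau_0$-edge and is preserved across each $\tau_2$-edge. Returning to the starting flag then gives $\lambda(x)=(-1)^{k}\lambda(x)$, so $k$ is even. If $e\in A$, the roles of $\tau_0$ and $\tau_2$ are interchanged. The same count again gives $(-1)^k=1$.

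An equivalent formulation uses Definition~\ref{def:all-crossing} directly. At every vertex of $\widehat M_\omega(\Hcal)$, exactly one of the two angle classes is the crossing class, and $\delta$ alternates in sign across the $k$ angles of that class. That alternation along the closed $02$-cycle forces $k$ to be even.

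The argument is essentially routine. The only point needing care is that the $02$-orbit of $e$ is a single alternating cycle containing exactly $k$ edges of each colour. This holds because $\tau_0$ and $\tau_2$ are fixed-point-free involutions generating a transitive action on $e$. No loops or degenerate configurations cause trouble: a $2$-cycle corresponds to $k=1$, which the argument correctly rules out. Applying this to every hyperedge proves the corollary.
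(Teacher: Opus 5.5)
Your argument is correct, and it is more elementary than the paper's. The paper invokes Theorem~\ref{thm:bipartite} to get an all-crossing orientation, then runs the parity count on the signed direction map $\delta$ around the $02$-cycle of a $c$- or $d$-type medial vertex. You run the same count on the bipartition pullback $\lambda$ itself. Since $\delta=\lambda\omega$ and $\omega$ changes sign under both $\tau_0$ and $\tau_2$, the two counts are equivalent. Yours, however, needs neither the medial map nor Theorem~\ref{thm:bipartite}. The identities~\eqref{eq:bipartition-identities} are just Definition~\ref{def:bipartite} applied to $\Hcal^A$ and pulled back to flags, so your remark that the route ``goes through'' the theorem is unnecessary. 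What your version actually proves is that every hyperedge of a bipartite hypermap has even valence, since a hyperedge of valence $k$ traces a closed walk of length $k$ in $\Gamma$. Combined with Proposition~\ref{prop:basic-pd}(i), which says $\Hcal$ and $\Hcal^A$ have the same hyperedges with the same valences, this gives the corollary. The paper's version instead states the obstruction in the language of the main theorem: an all-crossing medial vertex must have degree divisible by $4$. That fits the narrative but hides how little is needed. One small imprecision in your alternative formulation: the crossing versus source/sink dichotomy applies only to the original medial vertices of $\widehat M_\omega(\Hcal)$, not to the artificial bivalent bars.
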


\begin{proof}
By Theorem~\ref{thm:bipartite}, every hyperedge in an all-crossing orientation is of $c$- or $d$-type. Around a $c$-type hyperedge, the sign $\delta$ changes across every $\tau_0$-pair and is unchanged across every $\tau_2$-pair. Thus, on traversing the alternating $02$-cycle once, the sign changes once for every $\tau_0$-pair. It must return to its initial value, so the number of such pairs, namely the valence of the hyperedge, is even. The $d$-type case is identical with the roles of $\tau_0$ and $\tau_2$ exchanged.
\end{proof}

\begin{example}\label{ex:odd-hyperedge}
The projective-plane hypermap depicted in Figure~\ref{fig:tau-involutions} has a hyperedge of valence three. Corollary~\ref{cor:even-obstruction} immediately implies that neither of its four hyperedge partial duals is bipartite.
\end{example}

\begin{remark}\label{prop:specializations}
In the following two special cases, our constructions reduce to existing orientation characterizations.
\begin{enumerate}[label=\textup{(\roman*)}]
\item If $\Hcal$ is orientable, choose a global flag-sign function $\omega$ such that
\[
 \omega(\tau_i x)=-\omega(x)\qquad(i=0,1,2).
\]
Then $\varepsilon_\omega(a)=0$ for every medial edge $a$, and hence $\widehat M_\omega(\Hcal)=M(\Hcal)$. Under the standard correspondence between the $\tau$- and $(\sigma,\alpha)$-models (see~\cite[Section~1.3 and Theorem~2.4]{ChmutovVignes2022}; the permutation construction of the medial map is in~\cite[Definition~3.1]{CoriHetyei2025}), Theorems~\ref{thm:eulerian} and~\ref{thm:bipartite} reduce respectively to~\cite[Theorems~5.3 and~6.5]{HanMetsidik2026}.

\item If every hyperedge $e$ satisfies $\val(e)=2$, then every $\langle\tau_0,\tau_2\rangle$-orbit has four flags, so $\Hcal$ is a map and determines a ribbon graph $G$. Under the natural correspondence between hyperedges and edges of $G$, $M(\Hcal)$ is isomorphic, as a checkerboard-coloured map, to the usual medial graph $G_m$ of $G$; every $v_e$ is four-valent. In this case, $t$-type means that all four incident half-edges point in or all point out, while $c$- and $d$-types are the two classical all-crossing patterns in which the black or white angles are sources/sinks and the other colour consists of crossings.

Furthermore, by Definition~\ref{def:partial-dual}, under the natural correspondence between hyperedges and edges of $G$, $\Hcal^A$ is exactly the ribbon-graph partial dual $G^A$: both swap colours $0$ and $2$ in the four-flag $02$-orbit of each selected edge. By Lemma~\ref{lem:twist-flag}, $\omega$ is a choice of local orientations for the vertex discs of $G_m$, and $\varepsilon_\omega(a)=1$ precisely when $a$ is twisted relative to this choice. Thus $\widehat M_\omega(\Hcal)$ is isomorphic to the modified medial graph $\widehat G_m$ obtained from the same local orientation system. Theorem~\ref{thm:bipartite} therefore reduces to~\cite[Theorem~1.4]{DengJinMetsidik2020}, and Theorem~\ref{thm:eulerian} to~\cite[Theorem~1.5]{DengJinMetsidik2020}.
\end{enumerate}
\end{remark}

\begin{proof}
For (i), orientability is equivalent to a bipartition of the flag graph in which every $\tau_i$ changes the sign; see~\cite[Section~1.3]{ChmutovVignes2022}. Equation~\eqref{eq:twist} then gives $\varepsilon_\omega=0$. For (ii), the four-flag $02$-orbit is exactly the criterion for a map; the flag correspondence for the medial map is given by Proposition~\ref{prop:tau-medial}, and Lemma~\ref{lem:twist-flag} identifies band twists with the modified medial graph under the same local orientations. The remaining assertions about local types follow immediately from the alternating black and white angles at a four-valent vertex.
\end{proof}

\section{Conclusion}

The $\tau$-model renders both characterizations independent of orientability. The state-circle argument identifies the common mechanism behind the Eulerian theorem, while local orientation systems encode precisely the extra data needed for bipartiteness. Because vertices, edges, and faces play symmetric roles in a hypermap, the analogous characterizations for vertex and face partial duals are obtained simply by interchanging the subscripts of $\tau_i$.

\section*{Declaration of competing interest}
We declare that we have no financial and personal relationships with other people or organizations that can inappropriately influence our work.

\section*{Acknowledgements}
This work is supported by National Natural Science Foundation of China (Grant Number: 12661073) and Natural Science Foundation of Xinjiang (Grant Number: 2024D01A89).

\section*{Data availability}
No data was used for the research described in the article.
\bibliographystyle{unsrt}
\bibliography{Eulerian_and_Bipartite_Partial_Duals_of_Hypermaps}

\begin{thebibliography}{10}

\bibitem{Chmutov2009}
S.~Chmutov.
\newblock {Generalized duality for graphs on surfaces and the signed
  Bollobas--Riordan polynomial}.
\newblock {\em J. Combin. Theory Ser. B}, 99(3):617--638, 2009.

\bibitem{ChmutovVignes2022}
S.~Chmutov and F.~Vignes-Tourneret.
\newblock {Partial duality of hypermaps}.
\newblock {\em Arnold Math. J.}, 8:445--468, 2022.

\bibitem{Moffatt2011}
I.~Moffatt.
\newblock {A characterization of partially dual graphs}.
\newblock {\em J. Graph Theory}, 67(3):198--217, 2011.

\bibitem{Moffatt2013}
I.~Moffatt.
\newblock {Separability and the genus of a partial dual}.
\newblock {\em European J. Combin.}, 34(2):355--378, 2013.

\bibitem{HuggettMoffatt2013}
S.~Huggett and I.~Moffatt.
\newblock {Bipartite partial duals and circuits in medial graphs}.
\newblock {\em Combinatorica}, 33:231--252, 2013.

\bibitem{MetsidikJin2018}
M.~Metsidik and X.~Jin.
\newblock {Eulerian partial duals of plane graphs}.
\newblock {\em J. Graph Theory}, 87(4):509--515, 2018.

\bibitem{DengJinMetsidik2020}
Q.~Deng, X.~Jin, and M.~Metsidik.
\newblock {Characterizations of bipartite and Eulerian partial duals of ribbon
  graphs}.
\newblock {\em Discrete Math.}, 343(1):111637, 2020.

\bibitem{Cori1975}
R.~Cori.
\newblock {\em {Un code pour les graphes planaires et ses applications}},
  volume~27 of {\em Asterisque}.
\newblock Societe Mathematique de France, Paris, 1975.

\bibitem{Walsh1975}
T.~R.~S. Walsh.
\newblock {Hypermaps versus bipartite maps}.
\newblock {\em J. Combin. Theory Ser. B}, 18(2):155--163, 1975.

\bibitem{JonesSingerman1978}
G.~A. Jones and D.~Singerman.
\newblock {Theory of maps on orientable surfaces}.
\newblock {\em Proc. London Math. Soc.}, 37(2):273--307, 1978.

\bibitem{LandoZvonkin2004}
S.~K. Lando and A.~K. Zvonkin.
\newblock {\em {Graphs on Surfaces and Their Applications}}.
\newblock Springer, Berlin, 2004.

\bibitem{Lins1982}
S.~Lins.
\newblock {Graph-encoded maps}.
\newblock {\em J. Combin. Theory Ser. B}, 32(2):171--181, 1982.

\bibitem{Vince1983}
A.~Vince.
\newblock {Combinatorial maps}.
\newblock {\em J. Combin. Theory Ser. B}, 34(1):1--21, 1983.

\bibitem{GrossTucker1987}
J.~L. Gross and T.~W. Tucker.
\newblock {\em {Topological Graph Theory}}.
\newblock Wiley, New York, 1987.

\bibitem{EllisMonaghanMoffatt2013}
J.~A. Ellis-Monaghan and I.~Moffatt.
\newblock {\em {Graphs on Surfaces: Dualities, Polynomials, and Knots}}.
\newblock Springer, New York, 2013.

\bibitem{CoriHetyei2025}
R.~Cori and G.~Hetyei.
\newblock {A Whitney polynomial for hypermaps}.
\newblock {\em Adv. Appl. Math.}, 171:102951, 2025.

\bibitem{HanMetsidik2026}
Y.~Han and M.~Metsidik.
\newblock {Characterizations of bipartite and Eulerian partial duals of
  orientable hypermaps}.
\newblock \href{https://arxiv.org/abs/2606.30071}{arXiv:2606.30071 [math.CO]},
  2026.

\bibitem{EllisMonaghanMoffatt2012}
J.~A. Ellis-Monaghan and I.~Moffatt.
\newblock {Twisted duality for embedded graphs}.
\newblock {\em Trans. Amer. Math. Soc.}, 364(3):1529--1569, 2012.

\bibitem{JacksonVisentin1999}
D.~M. Jackson and T.~I. Visentin.
\newblock {A combinatorial relationship between Eulerian maps and hypermaps in
  orientable surfaces}.
\newblock {\em J. Combin. Theory Ser. A}, 87(1):120--150, 1999.

\bibitem{BredaDuarte2007}
A.~Breda d'Azevedo and R.~Duarte.
\newblock {Bipartite-uniform hypermaps on the sphere}.
\newblock {\em Electron. J. Combin.}, 14(1):R5, 2007.

\end{thebibliography}

\end{document}